\documentclass[twoside,openright,english,12pt]{amsart}
\usepackage[T1]{fontenc}
\usepackage{lmodern}
\usepackage{geometry} % see geometry.pdf on how to lay out the page. There's lots.
\geometry{a4paper} % or letter or a5paper or ... etc
\usepackage{babel}
\usepackage{amssymb,latexsym,amsmath,amsthm} 
\usepackage{color}
\usepackage{enumerate}
%\DeclareGraphicsRule{.tif}{png}{.png}{`convert #1 `dirname #1`/`basename#1 .tif`.png}
%\usepackage{tikz}
%\usetikzlibrary{matrix,arrows}

\setcounter{section}{-1}

\newtheorem{theorem}{Theorem}[section]
\newtheorem{lemma}[theorem]{Lemma}

\newtheorem{proposition}[theorem]{Proposition}
\theoremstyle{definition}
\newtheorem{definition}[theorem]{Definition}%[section]

\newtheorem{remark}{Remark}[section]

\def\C{\mathbb{C}}
\def\R{\mathbb{R}}
\def\H{\mathbb{H}}
\def\O{\mathbb{O}}
\def\Z{\mathbb{Z}}

\def\bv{\mathbf{v}}
\def\bw{\mathbf{w}}
\def\bx{\mathbf{x}}
\def\be{\mathbf{e}}
\def\bE{\mathbf{E}}
\def\bV{\mathbf{V}}

\def\cC{\mathcal{C}}
\def\cF{\mathcal{F}}
\def\cH{\mathcal{H}}
\def\cI{\mathcal{I}}
\def\cO{\mathcal{O}}
\def\cP{\mathcal{P}}
\def\cF{\mathcal{F}}
\def\cT{\mathcal{T}}

\def\cW{\mathcal{W}}

\def\fC{\mathfrak{C}}

\def\fg{\mathfrak{g}}
\def\fG{\mathfrak{G}}
\def\fK{\mathfrak{K}}
\def\fM{\mathfrak{M}}

\def\e{\varepsilon}

\title{$F$--MANIFOLDS AND GEOMETRY OF INFORMATION}

\author{No\'emie Combe}
\address{No\'emie Combe, Max Planck Institut for Matthematics in Sciences, Inselstra\ss e 22, 04103 Leipzig, Germany}
\email{noemie.combe@mis.mpg.de}

\author{Yuri I. Manin}
\address{Yuri I. Manin, Max Planck Institut for Mathematics, Vivatsgasse 7, 53111 Bonn, Germany}
\email{manin@mpim-bonn.mpg.de}
\date{02-02-2020 }
\keywords{Weak Frobenius manifolds, Vinberg cones, statistical manifold, paracomplex structure}
\subjclass{Primary: 53D45, 62B10; Secondary: 46Lxx, 51P05, 53Cxx}
\thanks{Both authors wish to thank the anonymous referee for suggestions. The first author wishes to thank Professor Ph. Combe and Professor H. Nencka for many helpful discussions and suggestions concering statistical manifolds. The first author is grateful to the Minerva grant from the Max Planck Society for support. The second author thanks the Max Planck Institute for support.}
\begin{document}
\maketitle
\begin{abstract}
The theory of $F$--manifolds, and more generally, manifolds endowed with
commutative and associative multiplication of their tangent fields, was discovered
and formalised in various models of quantum field theory
involving algebraic and analytic geometry, at least since 1990's.

The focus of this paper consists in the demonstration that
various spaces of probability distributions defined and studied
at least since 1960's also carry
natural structures of $F$--manifolds.

This fact remained somewhat hidden in various domains of the vast territory
of models of information storing and transmission that are briefly
surveyed here.
\end{abstract}

%\vspace{5pt}
\setcounter{tocdepth}{1}
\tableofcontents

%\vspace{5pt}

%%%%%%%%%%%%%%%%%%%%

\section{Introduction and summary}\label{S:intro}

The structure of {\it Frobenius manifolds} and its later weakened versions
{\it weak Frobenius manifolds}, also called {\it $F$--manifolds}, was discovered in the 1980's and 1990's in the process
of development and formalisation of Topological Field Theory, including Mirror
Conjecture: see~\cite{D96},\cite{HeMa99}, and references therein.

Below, speaking about {\it (super)manifolds $M$}, we understand objects
of one of the standard geometric categories: $C^{\infty}$, analytic, algebraic,
formal etc.

 According to B. Dubrovin (\cite{D96} and \cite{Ma99}), the main component of a Frobenius structure
 on $M$ is a {\it (super)commutative, associative and bilinear over constants
 multiplication $\circ : \cT_M \otimes \cT_M \to \cT_M$ on its tangent sheaf $\cT_M$}.
 
 Additional parts of the structure in terms of which further restrictions upon $\circ$
 might be given, are listed below:
 \begin{list}{--}{}
\item A subsheaf of flat vector fields $\cT^f_M \subset \cT_M$ consisting of
 tangent vectors flat in a certain affine structure.
 
\item A metric (nondegenerate symmetric quadratic form) $g: S^2(\cT_M)\to \cO_M$.
 
\item An identity $e$. 
 
\item An Euler vector field $E$ .
\end{list}

\vspace{5pt}
 
 {\it Relationships} between/{\it restrictions} upon all these structures depend on
 the context in which they appeared in various research domains. Accordingly,
 the versions of structures themselves were called by various names:
 besides Frobenius manifolds and $F$--manifolds, the reader can find
 {\it pre--Frobenius, weak Frobenius} (\cite{Ma99}), and most recently, 
 {\it Frobenius--like structures of order $(n,k,m)$} (in the latter, the tangent sheaf
 is replaced by an external sheaf,~\cite{HeVa18}). Therefore, we will not be very
 strict with terminology.

\vspace{5pt}

The popularity of Frobenius manifolds among algebraic/analytic geometers
 was growing after initial discovery
of three large classes of them, naturally arising in mathematics and physics:

\begin{enumerate}[(i)]

\item A choice of Saito's {\it good primitive form} determines a natural Frobenius
structure upon moduli (unfolding) spaces of germs of isolated singularities
of hypersurfaces (topological sector of the Landau--Ginzburg theory for physicists):
see~\cite{Sa82},\cite{Sa83},\cite{Od85}.

\vspace{3pt}

\item The formal moduli spaces of solutions to the Maurer--Cartan equations 
modulo gauge equivalence have natural formal Frobenius structure,
if these Maurer--Cartan equations are stated in the dGBV
(differential Gerstenhaber--Batalin--Vilko\-visky) framework:
see~\cite{BaKo72} and~\cite{LiZu93}.

\vspace{3pt}

\item The formal completion at zero of the cohomology (super)space of
any smooth projective (or compact symplectic) manifold
carries a natural formal Frobenius supermanifold (theory of
Gromov--Witten invariants): for early mathematical sources
see~\cite{KoMa94},\cite{Beh97},\cite{BehMa96}.

\vspace{3pt}

Here we add to this list

\vspace{3pt}
\item Convex homogeneous cones (\cite{Vi63},\cite{Vi65},\cite{BeIo78})
and the spaces of probability distributions (see \cite{BuCoNe00} and the monographs~\cite{Ch82},\cite{Am85}, \cite{AmNa00}).
\end{enumerate}
\vspace{5pt}

We show that, under some restrictions, these spaces carry structures of $F$--manifolds.
\vspace{3pt}

In fact, on the map of ``Information Geometry Land'' there is another domain,
connecting this land {\it not} with rigid structures such as metric geometries
(which are in the focus of this paper), but with rather more fluid ones,
of homological and especially homotopical algebra. Intuitively,
one starts with imagining, say, configurations of neural nets in brain as 
simplicial complexes, accompanied by highly non--obvious heuristic observation
that {\it complexity of information} that can be successfully treated by such a net
grows with {\it complexity of homotopy class} of its geometric
realisation. 
\vspace{3pt}

For an introduction to this domain aimed to mathematicians, cf.~\cite{Mar19}
and \cite{MaMar20}.
\vspace{3pt}

The contents of our survey are distributed as follows.
\vspace{3pt}

Section~\ref{S:Fman} of this article contains a survey of geometry of
Frobenius--like manifolds. 
\vspace{3pt}

In Section~\ref{S:sing}, we focus on the appearance of this structure
on the unfolding spaces of isolated singularities,
and stress the role of so called {\it potentials}
that reappear further in information geometry.
\vspace{3pt}

 Section~\ref{S:covcone} introduces $F$--structures
upon convex homogeneous cones and spaces of probability distributions
stressing the environment in which these $F$--structures look
similar to the ones of previous Section, but with replacement
of framework of complex varieties with the one of real geometry. 
\vspace{3pt}

Finally, Section~\ref{S:statman} introduces ``paracomplex'' structures bridging
complex and real geometry in this context, and revealing paracomplex
potentials.
\vspace{3pt}

%%%%%%%%%%%%%%%%%%%%%%%%%%%%%

\section{Frobenius manifolds and $F$--manifolds}\label{S:Fman}

\subsection{Frobenius manifolds.} We start, as above,
with a family of data 
\begin{equation}\label{E:FM}
(M; \quad \circ : \cT_M\otimes \cT_M \to \cT_M; \quad \cT_M^f \subset \cT_M; \quad
g : S^2(\cT_M) \to \cO_M), 
\end{equation}
mostly omitting identity $e$ and Euler field $E$.
\vspace{3pt}

The main additional structure bridging these data together is a family of (local) {\it potentials} $\Phi$
(sections of $\cO_M$)
such that for any (local) flat tangent fields $X,Y,Z$ we have
\[
g(X\circ Y,Z) = g(X, Y\circ Z) =(XYZ)\Phi .
\]
If such a structure exists, then (super)commutativity and associativity of $\circ$
follows automatically, and we say that the family~\eqref{E:FM} defines a {\it Frobenius manifold.}
\vspace{3pt}

\subsection{${F}$--identity}
\

This identity relates multiplication $\circ$ and the Lie
(super)commutator that in the theory of Frobenius manifolds
follows from the basic definitions, and in the theory
of $F$--manifolds is postulated.
\vspace{3pt}

It is convenient to introduce first the auxiliary {\it Poisson tensor} 
$P: \cT_M\times \cT_M \times \cT_M \to \cT_M$
\[
P_X (Z,W) := [X,Z\circ W] - [X,Z]\circ W - (-1)^{XZ} Z\circ [X,W] .
\]

Here and further on we write $(-1)^{XZ}$ in place of $(-1)^{|X||Z|}$, where $|X|$
denotes the parity ($\Z_2$--degree) of $X$.
\vspace{3pt}

About relationship between Poisson tensors and manifolds with Poisson
structure, (cf.~\cite{Ma99}, subsection 5.5, p. 47, and~\cite{Ma19}, Sec. 5).
\vspace{3pt}

\begin{definition}~\label{D:Fmanifold}
 Let $M$ be a (super)manifold endowed with
(super)commutative and associative multiplication $\circ$ in its
tangent sheaf.
\vspace{3pt}

$M$ with this structure is called an $F$--manifold, if it satisfies
the $F$--identity:
\[
P_{X\circ Y} = X\circ P_Y(Z,W) + (-1)^{XY} Y\circ P_X(Z,W).
\]
\end{definition}
\vspace{3pt}

\subsection{Compatible flat structures}~\label{S:cft}
\

An {\it affine flat structure} on a manifold $M$, by definition, is a local system
$\cT^f_M \subset \cT_M$ of finite--dimensional (over constants) supercommutative Lie algebras
of rank $dim\,M$ such that $\cT_M= \cO_M\otimes \cT^f_M$.
\vspace{3pt}

In the situation of 1.1, but not postulating
$F$--identity, assume that in a neighbourhood of any point of $M$
there exists a vector field $C$ such that the $\circ$--product of arbitrary
local flat fields $X,Y$ defined in this neighbourhood can be written as
\[
X\circ Y= [X,[Y,C]] .
\]
Such $C$ is called a local {\it vector potential} for $\circ$. Then we will call
$\cT_M^f$ {\it compatible} with $\circ$. If $\circ$ admits a {\it flat} identity $e$,
we will call $\cT_M^f$ compatible with $(\circ , e)$.
\vspace{3pt}

\begin{proposition}~\label{P:fid} 
In the situation of Definition~\ref{D:Fmanifold}, if $\circ$ admits a compatible
flat structure, then it satisfies the $F$--identity. Thus, $(M, \cT_M, \circ )$
is an $F$--manifold.
\end{proposition}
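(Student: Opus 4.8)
The plan is to reduce the $F$--identity to a verification on a local flat frame and then to compute directly with the vector potential. The first and most delicate point is that, although $P_X(Z,W)$ is $\cO_M$--linear in $Z$ and $W$ but only a first--order operator in the subscript $X$, the difference
\[
Q(X,Y,Z,W):=P_{X\circ Y}(Z,W)-X\circ P_Y(Z,W)-(-1)^{XY}Y\circ P_X(Z,W)
\]
is in fact $\cO_M$--multilinear in all four arguments, hence a genuine tensor. I would prove this by substituting $X\mapsto fX$ (and, by the symmetry of the identity under $X\leftrightarrow Y$, also $Y\mapsto fY$) and checking that every term carrying a derivative of $f$ cancels. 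That cancellation is forced precisely by the (super)commutativity and associativity of $\circ$ assumed in Definition~\ref{D:Fmanifold}; the super--signs must be tracked carefully, but they are arranged so that the offending derivative terms match in pairs. Since $Q$ is tensorial, it vanishes identically as soon as it vanishes on a local flat frame $(\partial_a)$ of $\cT_M^f$.

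On such a frame the computation simplifies drastically. For flat fields all mutual brackets vanish, so $P_X(Z,W)=[X,Z\circ W]$, and feeding in the vector potential $Z\circ W=[Z,[W,C]]$ gives $P_X(Z,W)=[X,[Z,[W,C]]]$, which by the graded Jacobi identity together with $[\partial_a,\partial_b]=0$ is totally (graded--)symmetric in $X,Z,W$. Writing $C=\sum_c C^c\partial_c$ in flat coordinates, one finds $\partial_a\circ\partial_b=\sum_c(\partial_a\partial_b C^c)\,\partial_c$, so the structure constants of $\circ$ are the second derivatives $\partial_a\partial_b C^c$, manifestly symmetric in $a,b$ as commutativity demands. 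Here one must be careful: $X\circ Y$ is not flat, so inside $P_{X\circ Y}$ the multiplication has to be used in this explicit $\cO_M$--bilinear form and \emph{not} through the potential formula.

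It then remains to insert these data into $Q(\partial_i,\partial_j,\partial_k,\partial_l)=0$ and to reduce it to a quadratic identity in the derivatives of $C$. The two structural facts that make it close are the total symmetry in $a,b,d$ of the triple product $\partial_a\circ\partial_b\circ\partial_d=\sum_n G^n_{abd}\partial_n$, with $G^n_{abd}=\sum_m(\partial_a\partial_b C^m)(\partial_d\partial_m C^n)$, which is exactly the associativity (WDVV--type) relation, together with the first derivative of that relation. I expect the main obstacle to lie here rather than in the reduction: after cancellation the surviving terms split into two genuinely inequivalent contraction types (a second derivative of $C$ paired against a third derivative, taken in two different ways), and matching them requires differentiating the associativity relation once and exploiting the symmetry of $G^n_{abd}$, not merely the existence of the potential. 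Keeping track of the super--signs through these steps is the secondary source of friction, but it is routine once the associativity relation has been differentiated and the frame symmetries invoked.
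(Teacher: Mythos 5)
The paper itself contains no proof of Proposition~\ref{P:fid}: it is a survey statement, quoted from the $F$--manifold literature (\cite{HeMa99}; see also \cite{Ma99}, Ch.~I), so your argument can only be compared with the proof given there --- and it is essentially that proof, carried out correctly. The two pivots of your plan both check out. First, the defect $Q(X,Y,Z,W)=P_{X\circ Y}(Z,W)-X\circ P_Y(Z,W)-(-1)^{XY}Y\circ P_X(Z,W)$ is indeed $\cO_M$--multilinear: substituting $X\mapsto fX$ and using $P_{fX}(Z,W)=fP_X(Z,W)-(Z\circ W)(f)\,X+Z(f)\,X\circ W+W(f)\,Z\circ X$, the derivative terms cancel against those coming from $P_{f(X\circ Y)}$ precisely because $(X\circ Y)\circ W=X\circ(Y\circ W)$ and $Z\circ(X\circ Y)=X\circ(Z\circ Y)$, i.e.\ by the assumed commutativity and associativity --- exactly as you claim. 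Second, on a flat frame one has $\partial_a\circ\partial_b=\sum_c(\partial_a\partial_b C^c)\partial_c$ and $P_{\partial_a}(\partial_b,\partial_d)=\sum_c(\partial_a\partial_b\partial_d C^c)\partial_c$, and after expanding $P_{\partial_i\circ\partial_j}$ through the non--tensoriality formula (your warning that $\partial_i\circ\partial_j$ is not flat is the right one), the six surviving contractions of second against third derivatives of $C$ cancel exactly when one differentiates the associativity relation $\sum_m(\partial_a\partial_b C^m)(\partial_m\partial_d C^n)=\sum_m(\partial_b\partial_d C^m)(\partial_m\partial_a C^n)$ once and uses the total symmetry of $G^n_{abd}$ in two different index groupings; I verified that this closes the identity, so the step you flag as the main obstacle is genuine but surmountable precisely as you describe. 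In short: a correct reconstruction of the Hertling--Manin argument, with the only caveat that the final quadratic identity is asserted rather than written out, and in the super case the signs you defer would still need to be tracked explicitly.
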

\vspace{3pt}

In the context of geometry of statistics/information, the following equivalent descriptions of
flat structure might be useful

\begin{list}{--}{}

\item An atlas of local coordinates, whose transition functions are affine
linear (over constants).
\vspace{3pt}

\item A torsionless flat connection $\nabla_0 : \cT_M \to \Omega^1_M \otimes_{\cO_M} \cT_M$.
\end{list}

Indeed, given $\nabla_0$, we can define $\cT_M^f$ as $Ker\, \nabla_0$. 
\vspace{3pt}

%%%%%%%%%%%%%%%%%%%%%%%%%%%

\section{ $F$--manifolds and singularities}~\label{S:sing}

\subsection{K. Saito's frameworks}~\label{S:saito}
\

We will describe here in considerable detail a class of (pre--)Frobenius structures
that was introduced by K. Saito in the context of unfolding isolated
singularities and periods of primitive forms (see~\cite{Sa82},\cite{Sa83},\cite{Ma98}). Our choice is motivated by
the fact that the central objects of the next Sec.~\ref{S:covcone} coming from a very different
environment (convex cones and probability spaces) look strikingly similar 
to real versions of Saito's frameworks.
 \vspace{3pt}

Intuitively, Saito's $F$--structures are canonical data arising upon 
unfolding spaces of isolated singularities, both in analytic and algebraic
geometry, in characteristic zero.
 \vspace{3pt}

More precisely,  let $p: N\to M$ be a submersive morphism of complex analytic (or algebraic) varieties
(we do not assume them to be compact). Denote by $d_p: \cO_M \to \Omega^1_{N/M}$
its relative differential. For a holomorphic function $F$ on $M$, the equation
$d_pF = 0$ defines the closed analytic subspace $i_C: C=C_{N/M}(F) \hookrightarrow N$ of fibrewise
critical points of $F$; denote by $p_C: C\to M$ the restriction of $p$ to $C$.
We will need also the invertible sheaf of holomorphic vertical volume
forms $\Omega^{max}_N/M$ and its restriction $L:= i_C^* (\Omega^{max}_{N/M})$ to $C$. 
Finally, we will assume given a nowhere vanishing global section $\omega$ of $\Omega_{N/M}^{\max}$.
 \vspace{3pt}

The following Definition and Proposition (due to K.~Saito) are borrowed from~(\cite{Ma98}, 2.1.1).

\begin{definition}\label{D:sf}
 The family of data $(p: N\to M; F; \omega )$ as above is called Saito's framework
if it satisfies the following additional conditions.
 \vspace{3pt}
 \begin{enumerate}
\item Define the map $s:\cT_M \to p_{C*}(\cO_C)$ by $X \mapsto \overline{X} F\ \mathrm{mod}\, J_F$
where $J_F$ is the ideal defining $C$. Assume that $C$ is finite and flat over M.
 \vspace{3pt}
 
\item Now consider the Hessian of function $F$. In local coordinates $z=(z_a)$, $a=1, \dots , m$; $t = (t_b)$ such that $(t_b)$
is a maximal set of coordinates constant along fibres of $p$ the Hessian can be defined
as a section of $L^2$ that can be written as
\[
\mathrm{Hess} (F) := i_C^* [\det (\partial^2F/\partial z_a \partial z_b) (dz_1\wedge \dots \wedge dz_n)^2] .
\]
\end{enumerate}
\end{definition}
Make an additional assumption that the subspace $G_C$ of zeroes of $\mathrm{Hess} (F)$ is a divisor,
and that $p_C$ is \'etale outside the divisor $G:= i_{C*}(G_C)$ in $M$. 
 \vspace{3pt}
 
\begin{proposition} Let $(p: N\to M; F; \omega)$ be a Saito's framework. Consider a local tangent
field $X$ on $M$ over whose definition domain $p_{C*}$ is a disjoint union of isomorphisms.
Then we can define an 1--form $\epsilon$ on $M\setminus G$ whose value
upon these disjoint components is given by
\[
i_X (\epsilon ) := \mathrm{Tr}_{C/M} (p_C i_C^*(\omega^2)/ \mathrm{Hess} (F)) .
\]
Moreover, we can define commutative and associative product $\circ$ 
by 
\[
\overline{X\circ Y} F = \overline{X}F\cdot \overline{Y} F \ \mathrm{mod} J_F .
\]

Then the scalar product $g: S^2(\cT_M) \to \cO_{M\setminus G}$ defined by
\[
g(X,Y):= i_{X\circ Y}(\epsilon )
\]
is a flat metric, which together with $g$ extends regularly to $M$.
\end{proposition}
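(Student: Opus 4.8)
The plan is to extract the entire structure from the single map $s:\cT_M \to p_{C*}(\cO_C)$, $X \mapsto \overline{X}F \bmod J_F$ of Definition~\ref{D:sf}(1), and then read off the product, the pairing, and its nondegeneracy from the commutative algebra $p_{C*}(\cO_C)$. First I would check that $s$ is a well-defined $\cO_M$-linear map: two lifts $\overline{X},\overline{X}'$ of a field $X$ on $M$ differ by a vertical field $V$, and $VF$ lies in the Jacobian ideal $J_F$ generated by the fibrewise derivatives $d_pF$ that cut out $C$, so $\overline{X}F \bmod J_F$ is independent of the lift. Condition (1) ($C$ finite and flat over $M$) makes $p_{C*}(\cO_C)$ a locally free $\cO_M$-algebra of the expected rank, and the framework hypotheses make $s$ an isomorphism of $\cO_M$-modules, it being the Kodaira--Spencer map of the unfolding. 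Transporting the multiplication of $p_{C*}(\cO_C)$ through $s$, i.e.\ setting $X \circ Y := s^{-1}(s(X)\,s(Y))$, reproduces exactly $\overline{X\circ Y}F = \overline{X}F \cdot \overline{Y}F \bmod J_F$; since $\cO_C$ is a commutative associative $\cO_M$-algebra, $\circ$ is automatically commutative and associative. This disposes of the product $\circ$.

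Next I would treat the $1$-form $\epsilon$ and the form $g$. On $M\setminus G$ the Hessian is an invertible section of $L^2$, and $i_C^*(\omega^2)$ is also a section of $L^2$, so the quotient $i_C^*(\omega^2)/\mathrm{Hess}(F)$ is a genuine function on $C$ there; pairing it against $s(X)$ under $\mathrm{Tr}_{C/M}$ produces the announced $\epsilon$, and substituting $X\circ Y$ yields $g(X,Y) = \mathrm{Tr}_{C/M}\big(s(X)\,s(Y)\, i_C^*(\omega^2)/\mathrm{Hess}(F)\big)$. Symmetry of $g$ is then immediate, since $s(X)\,s(Y) = s(Y)\,s(X)$ in $\cO_C$. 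For nondegeneracy I would invoke Grothendieck local duality: fibrewise $\cO_C$ is a complete intersection, hence a Gorenstein Artinian algebra whose socle is generated by the class of $\mathrm{Hess}(F)$, so the residue pairing $(\phi,\psi)\mapsto \mathrm{Tr}_{C/M}(\phi\psi\, i_C^*(\omega^2)/\mathrm{Hess}(F))$ is perfect on $p_{C*}(\cO_C)$; pulling it back along the isomorphism $s$ shows $g$ is nondegenerate on $M\setminus G$, hence a metric there.

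The main obstacle is the remaining assertion, that $g$ is flat and that $\epsilon$ and $g$ extend regularly across $G$. The regular extension I would obtain by a local argument: although $1/\mathrm{Hess}(F)$ acquires poles along $G_C$, the trace $\mathrm{Tr}_{C/M}$ of the residue symbol stays holomorphic across $G$, the apparent poles cancelling upon summation over the fibre, so $\epsilon$ and hence $g$ extend. Flatness is the genuinely delicate point, and it is precisely where the special choice of the volume form $\omega$ enters: I would show that $\epsilon$ is closed and use this to integrate flat coordinates in which the Christoffel symbols of $g$ vanish, equivalently to produce a local potential $\Phi$ with $g(X\circ Y,Z)=(XYZ)\Phi$ for flat $X,Y,Z$. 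This closedness fails for an arbitrary $\omega$; it requires $\omega$ to be a primitive (good) form in Saito's sense, and verifying that integrability condition, rather than the algebraic steps above, is where the real work lies.
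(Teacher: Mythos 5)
First, a point of reference: the paper contains no proof of this Proposition at all --- it is quoted (``due to K.~Saito'') from~\cite{Ma98}, 2.1.1, so your attempt can only be measured against the standard treatment in that source. Your algebraic skeleton matches it: well-definedness and $\cO_M$-linearity of the Kodaira--Spencer map $s$, transport of the commutative associative multiplication of $p_{C*}(\cO_C)$ through $s$ to define $\circ$, symmetry of $g$ from commutativity, nondegeneracy over $M\setminus G$ from Gorenstein (complete intersection) duality with socle generated by $\mathrm{Hess}(F)$, and regular extension across $G$ because the trace form divided by the Hessian is the Grothendieck residue pairing, which stays regular. One caveat: the isomorphy of $s$, which you need both to define $\circ$ and to pull the perfect pairing back to $\cT_M$, does \emph{not} follow from the conditions actually printed in Definition~\ref{D:sf} (finiteness/flatness of $C$ and \'etaleness outside $G$ do not force $s$ to be bijective, nor even $\deg(C/M)=\dim M$); in the cited source it is an axiom of the framework, lost in this paper's abbreviated definition, so it must be stated as a hypothesis rather than claimed as a consequence.

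The genuine gap is flatness of $g$, which is the entire nontrivial content of the Proposition, and your proposal does not prove it. You correctly diagnose that flatness fails for an arbitrary nowhere-vanishing $\omega$ and requires $\omega$ to be a primitive (good) form in Saito's sense --- this diagnosis is accurate, and it is exactly why the paper's introduction speaks of ``a choice of Saito's good primitive form'' and why the Proposition is handled by citation. But having flagged it, you leave the verification as ``where the real work lies,'' so the claim remains unestablished; and the mechanism you sketch --- show that $\epsilon$ is closed and then ``integrate flat coordinates'' --- is not a valid inference. Closedness of the $1$-form $\epsilon$ only produces a local function $h$ with $\epsilon = dh$; it implies nothing about the vanishing of the curvature of $g$, nor the existence of a potential $\Phi$ with $g(X\circ Y,Z)=(XYZ)\Phi$. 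In Saito's theory, flatness of the residue metric attached to a primitive form is a deep theorem (the existence of primitive forms is due to M.~Saito, via higher residue pairings and the period mapping), not something recoverable by the elementary closedness argument you propose. So the reduction of everything except flatness is sound and standard, but the flatness assertion --- the point of the Proposition --- is neither proved nor provable with the tools in your sketch.
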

 \vspace{3pt}

\subsection{Potentiality and associativity}
\

We describe now the axiomatisation of Saito's frameworks due to B. Dubrovin:
cf.~(\cite{Ma99}, p.19, Definition 1.3 and further on).
 \vspace{3pt}

\begin{definition} A pre--Frobenius manifold is a (super)manifold $M$ endowed with an
affine flat structure $\cT_M^f$ as in~\ref{S:cft} above; with a compatible metric $g$
(i.~e. $g$ is constant upon flat fields); and with an even symmetric tensor 
tensor $A: S^3(\cT_M) \to \cO_M$.
 \vspace{3pt}

This pre--Frobenius manifold is called potential one, if
everywhere locally there exists an even section $\Phi$ of $\cO_M$ such that restriction of $A$
upon $\cT^f_M$ can be written as
\[
A(X,Y,Z)= (XYZ)\Phi .
\]
\end{definition}
 \vspace{3pt}

Upon flat vector fields of such a manifold, we can introduce an even multiplication $\circ$
bilinear over constants such that
\[
A(X,Y,Z) = g(X\circ Y,Z)= g(X, Y\circ Z) .
\]

It is commutative and associative, and can be extended to an $\cO_M$--bilinear,
commutative and associative product $\cT_M\otimes_{\cO_M} \cT_M \to \cT_M$
also denoted $\circ$. 
 \vspace{3pt}

If we choose local flat coordinates $(x^a)$ and respective local basis of tangent
fields $(\partial_a)$, we can write
\[
(\partial_a\circ \partial_b\circ \partial_c)\Phi = \partial_a\partial_b\partial_c \Phi,
\]
and then compatibility of $\Phi$ and $g$ will mean that
\[
\partial_a \circ \partial_b = \sum _c \Phi_{ab}^c \partial_c, \quad \Phi_{ab}^c := \sum_e 
( \partial_a\partial_b\partial_c \Phi ) g^{ec} ,\quad (g^{ab}) := (g_{ab})^{-1}.
\]
Notice that the last formula should be read as an inverted matrix.
 \vspace{3pt}
 
Rewriting the associativity of $\circ$ in the usual way as $( \partial_a\circ \partial_b)\circ \partial_c
= \partial_a\circ (\partial_b\circ \partial_c )$ we obtain a non--linear system of {\it Associativity Equations}, partial
differential equations for $\Phi$:
 \[
\forall a,b,c,d :\quad \sum_{ef} \Phi_{abe}g^{ef}\Phi_{fcd} = (-1)^{a(b+c)} \sum_{ef} \Phi_{bce}g^{ef}\Phi_{fad}.
\]
In the community of physicists, they
are known as {\it WDVV} (Witten--Dijkgraaf--Verlinde--Verlinde) equations.
 \vspace{3pt}

\subsection{Structure connections, flatness, and potentiality} 
\

To conclude this section, we describe below important criteria of potentiality
and associativity expressed in terms of the structure connection of a pre--Frobenius
manifold.
 \vspace{3pt}

First, introduce the connection $\nabla_0 : \cT_M \to \Omega^1_M\otimes_{\cO_M} \cT_M$,
uniquely defined by the horizontality of $\cT^f_M$. Of course, it extends to
the differential upon $ \Omega^*_M\otimes_{\cO_M} \cT_M$ in the standard way.
 \vspace{3pt}

One easily sees that $\nabla_0$ can be further extended to a pencil of connections $\nabla_{\lambda}$
depending on an even parameter $\lambda$: the respective covariant derivative is
\[
\nabla_{\lambda ,X}(Y) := \nabla_{0,X}(Y)+ \lambda X\circ Y.
\]
We will refer to this pencil as the structure connection of our pre--Frobenius manifold.
 \vspace{3pt}

Here is our final result, that can be checked by direct calculations (cf.~\cite{Ma99}, 1.5 and 1.6).
Put $\nabla_{\lambda}^2 =\lambda^2 R_2 + \lambda R_1$. 
 \vspace{3pt}

\begin{proposition}
\ 

\begin{enumerate}
\item Potentiality of $(M,g,A)$ is equivalent to the vanishing of $R_1$,
that in turn is equivalent to the identity holding for all local tangent fields:
\[
\nabla_X(Y\circ Z) -  (-1)^{XY}\nabla_Y (X\circ Z) +X\circ \nabla_YZ - (-1)^{XY} Y\circ \nabla_XZ
- [X,Y]\circ Z  = 0.
\]
% \vspace{3pt}

\item Associativity of $(M,g,A)$ is equivalent to the vanishing of $R_2$.
\end{enumerate}
\end{proposition}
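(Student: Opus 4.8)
The plan is to compute the square of the pencil connection $\nabla_\lambda$ directly and extract its coefficients in $\lambda$. Since $\nabla_{\lambda,X}(Y) = \nabla_{0,X}(Y) + \lambda\, X\circ Y$, the curvature operator $\nabla_\lambda^2 = \nabla_\lambda \circ d + d\circ \nabla_\lambda$ (extended to $\Omega^*_M \otimes \cT_M$) is polynomial in $\lambda$ of degree two. The constant term is the curvature of $\nabla_0$, which vanishes because $\cT_M^f$ is by definition a local system (flat structure), so only $R_1$ and $R_2$ survive. I would begin by writing $\nabla_\lambda^2$ evaluated on a pair of local tangent fields $X,Y$ acting on a third field $Z$, and separate terms by their degree in $\lambda$.

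The $\lambda^2$ coefficient $R_2$ comes entirely from iterating the multiplication term $X\circ(\,\cdot\,)$, so that $R_2(X,Y)Z$ is (up to sign) $X\circ(Y\circ Z) - (-1)^{XY} Y\circ(X\circ Z)$, which is the associator/commutator expressing failure of associativity of $\circ$. Its vanishing is literally the symmetry in the multiplication needed for associativity once one recalls that $\circ$ is already commutative; this gives part (2). The $\lambda^1$ coefficient $R_1$ mixes one factor of $\nabla_0$ with one factor of $\circ$, and assembling the cross terms from the definition of curvature yields precisely the displayed expression
\[
\nabla_X(Y\circ Z) - (-1)^{XY}\nabla_Y(X\circ Z) + X\circ \nabla_Y Z - (-1)^{XY} Y\circ \nabla_X Z - [X,Y]\circ Z.
\]
So proving part (1) reduces to showing this ``first Bianchi type'' identity is equivalent to potentiality of $(M,g,A)$. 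Here I would pass to local flat coordinates $(x^a)$ with basis $\partial_a$, where $\nabla_0 \partial_a = 0$ and the Lie brackets $[\partial_a,\partial_b]$ vanish, so that $R_1 = 0$ collapses to a symmetry condition on the structure constants, equivalently on the totally symmetric tensor $A(X,Y,Z) = g(X\circ Y, Z)$.

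The main obstacle I anticipate is the bookkeeping in part (1): one must verify that the symmetry of $R_1$ in flat coordinates, namely $\partial_a A_{bcd} = \partial_b A_{acd}$ after lowering indices with $g$ and using constancy of $g$ on flat fields, is exactly the closedness condition that guarantees local existence of a potential $\Phi$ with $A_{abc} = \partial_a\partial_b\partial_c \Phi$. I would handle this by first showing $R_1 = 0$ forces $\partial_{[a} A_{b]cd} = 0$ (total symmetry of $\partial A$), then invoking the Poincar\'e lemma twice to integrate $A$ to a potential, and conversely checking that any potential $\Phi$ produces a vanishing $R_1$ by the equality of mixed partials. The sign factors $(-1)^{XY}$ in the super setting require care but follow the same scheme with parities tracked through each transposition; the essential content is identical to the even case, and the computation is the direct one referenced in~\cite{Ma99}, 1.5 and 1.6.
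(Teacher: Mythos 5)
Your proposal is correct and takes essentially the same route as the paper, which gives no written-out argument but states that the result ``can be checked by direct calculations (cf.~\cite{Ma99}, 1.5 and 1.6)'': expanding $\nabla_\lambda^2$ in powers of $\lambda$, noting the constant term vanishes by flatness of $\nabla_0$, reading off $R_2(X,Y)Z = X\circ(Y\circ Z)-(-1)^{XY}Y\circ(X\circ Z)$ (whose vanishing is equivalent to associativity given commutativity) and $R_1$ as the displayed identity, and then in flat coordinates identifying $R_1=0$ with total symmetry of $\partial_a A_{bcd}$, which integrates to a local potential $\Phi$ by the Poincar\'e lemma, is exactly that calculation. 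Two harmless slips that you should fix in a write-up: the curvature is the square $\nabla_\lambda\circ\nabla_\lambda$ of the connection extended to $\Omega^*_M\otimes_{\cO_M}\cT_M$, not $\nabla_\lambda\circ d + d\circ\nabla_\lambda$ (the operator $d$ alone is not defined on $\cT_M$--valued forms), and integrating the totally symmetric tensor $A_{abc}$ down to $\Phi$ requires three successive applications of the Poincar\'e lemma (one per index), not two.
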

 \vspace{3pt}
 
 %%%%%%%%%%%%%%%%%%%%%%%%%%%%%%%%%%
 
 \section{Convex cones and families of probabilities}\label{S:covcone}
 \vspace{3pt}

\subsection{Basic example: probability distributions on finite sets}~\label{S:bcfd}
\

Consider a finite set $X$. {\it A probability distribution} $P_X$ on $X$
is a map $P_X: X\to \R, x\mapsto p_x\in [0,1],$ such that $\sum_{x\in X} p_x=1$.
 \vspace{3pt}

The simplest geometric image of the set of all probability distributions on $X$
is the simplex $\Delta_X$ spanned by the end--points of basic coordinate vectors in $\R^X$.
We will also consider its maximal open subset ${}^{\circ}\Delta_X$:
\[
{}^{\circ}\Delta_X := \{(p_x) \,|\, 0< p_x < 1 \ for \ all \ x\in X\}.
\]

The existence of highly non--trivial geometries (in particular, $F$--geometry) naturally supported by such simplices
was one of the first discoveries in the domain of future ``Geometry of Information''.
(As we mentioned, another developments led through homological and homotopical algebra.)
 \vspace{3pt}

The earliest sources here are~\cite{Ch64} and~\cite{Ch65}; see also the monograph~\cite{Ch82} and~\cite{MoCh89},\cite{MoCh91-1},\cite{MoCh91-2}. 
One of the contemporary
expositions is given in~\cite{Mar19}.
 In order to avoid set--theoretical difficulties,
we will be working in a {\it fixed small universe}. 
 \vspace{3pt}

We start with geometry.
 \vspace{3pt}

\subsection{Convex cones, potentiality, and $F$--structures}~\label{S:ccpF}
\

The union of all oriented half--lines in $\R^X$ starting at $(0,\dots ,0)$ and containing
a point of ${}^{\circ}{\Delta}_X$ is a particular case of the general class
of open {\it convex cones}. We use this terminology
here in the sense of~\cite{Vi63}, Ch.1, Introduction, Def.1. 
 \vspace{3pt}

 Namely, let $R$ be a finite dimensional real linear space
(former $\R^X$). By definition, a cone $V\subset R$ is a non--empty subset,
closed with respect to addition and multiplication by positive reals. Moreover, the closure
of $V$ should not contain a real linear subspace of positive dimension.
 \vspace{3pt}

Following~\cite{Vi63}, Ch.1, section 2, we will now introduce the definition and state
main properties of {\it characteristic functions} of general convex cones.
 \vspace{3pt}

Let again $R$ be an oriented finite dimensional real affine space, $R^{\prime}$ its dual space.
We will denote the value of $x^{\prime}\in R^{\prime}$ upon $x\in R$
as $\langle x,x^{\prime} \rangle$.
 \vspace{3pt}
 
\begin{definition} Let $V\subset R$ be a convex cone and $vol_{V^{\prime}}$
be a differential form of maximal degree (a volume form) invariant
with respect to translations in $R^{\prime}$. 
 \vspace{3pt}

The function $\varphi : V\to \R$ defined by
\[
\varphi_{_{V}} (x) := \int_{V^{\prime}} e^{- \langle x,x^{\prime} \rangle} vol_{V^{\prime}}
\]
is called a characteristic function of $V$.
\end{definition}

Since translation invariant volume forms are defined up to a positive constant factor,
the same is true for characteristic functions.
 \vspace{3pt}

Consider now the cone $V$ as a smooth manifold, whose tangent space at any point $x$
can (and will) be canonically identified with $R$, by the parallel transport
identifying $x\in V$ with $0\in R$. Fixing an affine coordinate system $(x^i)$ in $R$,
put
\[
g_{ij}:= \partial^2 \ln \varphi_{_{V}}/\partial x^i \partial x^j \  .
\]
 \vspace{3pt}

The main result from~\cite{Vi63}, needed here, is the following theorem.
 \vspace{3pt}

\begin{theorem}~\label{Th:sqf}
\ 

\begin{enumerate}
 \item The symmetric quadratic form $\sum_{i,j} g_{ij}dx^i dx^j$ determines
a Riemannian metric on $V$.
 \vspace{3pt}

\item The respective metric defines the torsionless canonical connection
on the tangent bundle $\cT_V$ whose components in any affine coordinate system
are
\[
\Gamma^i_{jk} = \frac{1}{2} \sum_l g^{il} \frac{\partial^3 \ln \varphi}{\partial x^j \partial x^k \partial x^l} ,
\]
with
\[
\quad \sum_j g^{ij} g_{jk} =\delta^i_k .
 \]
 \vspace{3pt}

\item Hence the formula
\[
\sum_i a^i \partial_{x_i} \circ \sum_j b^j\partial_{x_j} :=
- \sum_{i,j,k} \Gamma^i_{jk} a^jb^k \partial_{x_i}
\]
defines on $\cT_V$ a commutative $\R$--bilinear composition.
\end{enumerate}
\end{theorem}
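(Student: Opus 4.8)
The plan is to treat all three parts through the single structural observation that $g$ is a \emph{Hessian (potential) metric}: writing $\psi:=\ln\varphi$, the coefficients are $g_{ij}=\partial_i\partial_j\psi$, so the third derivatives $\psi_{ijk}:=\partial_i\partial_j\partial_k\psi$ form a totally symmetric tensor. Granting this, part~(1) is the assertion that $\psi$ is strictly convex, part~(2) holds because the Levi--Civita formula collapses for a Hessian metric, and part~(3) is immediate from the symmetry of the Christoffel symbols in their lower indices.

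For part~(1), I would first justify differentiation under the integral sign. For $x$ in the open cone there is a $\delta>0$ with $\langle x,x'\rangle\ge\delta|x'|$ for all $x'\in V'$, by homogeneity and compactness of $V'\cap S^{n-1}$, so the integrand $e^{-\langle x,x'\rangle}$ decays exponentially and dominates, uniformly in a neighbourhood of $x$, all polynomial factors $x'_{i_1}\cdots x'_{i_k}$ produced by differentiating; dominated convergence then legitimises differentiating $\varphi$ to all orders. Next I would introduce on $V'$ the probability measure
\[
d\nu_x(x'):=\frac{1}{\varphi(x)}\,e^{-\langle x,x'\rangle}\,vol_{V'},
\]
with respect to which a direct computation gives $\partial_i\psi=-\,\mathbb{E}_{\nu_x}[x'_i]$ and
\[
g_{ij}=\partial_i\partial_j\psi=\mathbb{E}_{\nu_x}[x'_ix'_j]-\mathbb{E}_{\nu_x}[x'_i]\,\mathbb{E}_{\nu_x}[x'_j],
\]
so that $g_{ij}$ is the covariance matrix of the coordinate functionals under $\nu_x$. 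Such a matrix is automatically positive semidefinite, and it is strictly positive definite precisely because no nontrivial linear combination $\sum_i c_i x'_i$ is constant on the support of $\nu_x$: the cone $V$ is pointed, so its dual $V'$ is solid, spanning $R'$, which forbids such a relation unless $c=0$. Hence $\sum g_{ij}\,dx^i dx^j$ is a genuine Riemannian metric, proving~(1).

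For part~(2), I would identify the ``canonical connection'' with the Levi--Civita connection of $g$ and evaluate its Christoffel symbols using the Hessian structure. Since $\partial_j g_{kl}=\psi_{jkl}$ is totally symmetric, in the standard expression
\[
\Gamma^i_{jk}=\tfrac12\sum_l g^{il}\bigl(\partial_j g_{kl}+\partial_k g_{jl}-\partial_l g_{jk}\bigr)
\]
all three inner terms equal $\psi_{jkl}$, so two survive and one cancels, giving exactly $\Gamma^i_{jk}=\tfrac12\sum_l g^{il}\,\partial_j\partial_k\partial_l\ln\varphi$. Torsion-freeness and compatibility with $g$ hold by the defining properties of the Levi--Civita connection, so no separate verification is needed; the symmetry $\Gamma^i_{jk}=\Gamma^i_{kj}$, which is all that part~(3) uses, is visible from the symmetry of $\psi_{jkl}$ in $j,k$.

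Part~(3) is then purely formal: the product $\partial_j\circ\partial_k:=-\sum_i\Gamma^i_{jk}\,\partial_i$, extended bilinearly, is $\R$-bilinear by construction and commutative because $\Gamma^i_{jk}=\Gamma^i_{kj}$. (Only commutativity and bilinearity are asserted here; associativity, and hence the full $F$-structure, is a further condition treated separately.) The one genuinely substantive step is the analytic core of part~(1): justifying differentiation under the integral and, above all, upgrading positive semidefiniteness of the covariance to strict positivity, i.e.\ strict convexity of $\ln\varphi$. This is exactly where the standing hypotheses on the cone---convexity and pointedness, forcing the dual cone to span---are indispensable, and I expect it to be the main obstacle; parts~(2) and~(3) are formal consequences once the potential character of $g$ is in hand.
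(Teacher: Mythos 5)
Your proof is correct. Note that the paper itself gives no proof of this theorem---it is quoted as ``the main result from \cite{Vi63}, needed here''---and your argument (differentiation under the integral sign justified by the uniform bound $\langle x,x'\rangle\ge\delta|x'|$, the covariance interpretation of $g_{ij}=\partial_i\partial_j\ln\varphi$ with strict positive definiteness forced by solidity of the dual cone, and the collapse of the Levi--Civita formula for a Hessian metric yielding the symmetric $\Gamma^i_{jk}$) is essentially Vinberg's classical proof, so yours matches the source the paper relies on.
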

 \vspace{3pt} 
 
At this point, the reader should turn back and compare the statement of Theorem~\ref{Th:sqf} constructions involving the Hessian in the Definition~\ref{D:sf}, and subsequent treatment of 
Associativity Equations and potentiality. Clearly, geometry of convex cones
(in particular, cones generated by probability distributions upon finite sets)
provides strong analogies with theory of unfolding spaces of singularities.

\smallskip

In particular, convex cones admit families of $F$--structures depending on the choice
of an affine coordinate system on $R$.
 \vspace{3pt}

\subsection{$\sigma$--algebras and categories of probability distributions}
\

In order to extend the notion of a probability distribution upon 
possibly infinite sets $X$, and to pass to categorical constructions,
 we must recall the definition of a $\sigma$--algebra.
 \vspace{3pt}
 
 Here is a summary of main participants of the game (omitting certain details).
 \vspace{3pt}

Consider a set $X$ and a collection of its subsets $\cF$ satisfying the following restrictions:
 \vspace{3pt}

$X\in \cF$; if $U,V\in \cF$, then $U\setminus V\in \cF$,
so in particular $\emptyset \in \cF$; for any {\it countable} subcollection of
$\cF$, the union of its elements belongs to $\cF$.
 \vspace{3pt}

Such a pair $(X , \cF)$ is called a $\sigma$--{\it algebra.}
 \vspace{3pt}

From the definition it follows that: 
 \vspace{3pt}
 \begin{enumerate}[(a)]
 
\item Intersection of all elements of a countable subcollection of $\cF$ belongs to $\cF$.
 \vspace{3pt}
 
\item If a collection $\cF$ is a {\it countable partition} of $X$, and $\cF^{\prime}$
is the collection formed by all unions of parts of this partition,
then $(X,\cF^{\prime})$ is a $\sigma$--algebra.
\end{enumerate}
 \vspace{3pt}

Given a $\sigma$--algebra $(X ,\cF )$, we will be considering
measures and probability measures/distributions on it.
 \vspace{3pt}

Generally, let $(S, +,0)$ be a commutative semigroup with composition law $+$
and zero element. Then an $S$--valued measure $\mu$ on $(X , \cF )$ 
is a map $\mu :\, \cF \to S$ such that $\mu (\emptyset )=0$,
and $\mu (X\cup Y) + \mu (X\cap Y)= \mu(X) + \mu (Y)$.
 \vspace{3pt}

 Such a measure is called {\it a probability distribution $p$} if $S$ is the additive semigroup
 of non--negative real numbers, and moreover, for any countable subfamily
 $(U_i)$, $i= 1, 2,3, \dots$ of elements of $\cF$ with empty pairwise intersections,
 we have $p(\cup_{i=1}^{\infty} U_i )= \sum_{i=1}^{\infty} p(U_i)$; and
 if such a countable subfamily covers $X$, then the sum of probabilities is 1.
 \vspace{3pt}

\begin{definition}
 Category $CAP$ of probability distributions (\cite{Ch65}) consists of the following data:
 \vspace{3pt}
 
 \begin{enumerate}
\item An object of $CAP$ is the set $Cap (X , \cF )$ of all probability distributions 
on a $\sigma$--algebra $(X , \cF)$.
 \vspace{3pt}
 
\item One (Markov) morphism $\Pi \in Hom_{CAP} (Cap (X_1 , \cF_1 ), Cap (X_2 , \cF_2 ))$
is given by a ``transition measure'', that is a function $\Pi \{* | x^{\prime}\}$ upon $\cF_2\times X_1$
such that for a fixed $U\in \cF_2$, $\Pi \{U | x_1\}$ is $\cF_1$--measurable function
on $X_1$, and for a fixed $x_1 \in X_1$,  
$\Pi \{U | x_1\}$ is a probability distribution upon $\cF_2$.
 \vspace{3pt}
 
Explicitly, such $\Pi$ sends the probability distribution $P_1\in Cap(X_1, \cF_1)$
to the probability distribution $P_2\in Cap(X_2, \cF_2)$ given by
 \[
P_2(X_2 | x_1) := \int_{X_1} \Pi\{ * | x_1\} P_1\{dx_1\}.
\]
\end{enumerate}
\end{definition}

For (more or less evident) description of identical morphisms and composition of morphisms,
see~\cite{Ch65}.
 \vspace{3pt}

Below, we will return from general convex cones to the ones obtained
from $\Delta_X$ by passing to the union of all oriented half--lines in $\R^X$
connecting the origin with a point in ${}^{\circ}\Delta_X$.
Clearly, the boundary of such a cone is a union of cones of the same type
 with vertices 
corresponding to elements of all subsets $\{i_1, \dots , i_m\} = \{1,\dots ,n\}$.
Geodesics of the respective metrics are simply
segments of affine lines in $R$, although the metrics themselves
blow up to infinity near each respective face.
 \vspace{3pt}

This makes it possible to bridge two different paths from the intuitive
image ``description of a global space by approximating it with finite subsets of points'':
 \vspace{3pt}
 
 \begin{enumerate}[1)]
\item Passing from probability distributions on finite subsets to the 
probability distribution on the whole $\sigma$--algebra (\cite{CoGw17},
6.1.2).
 \vspace{3pt}

\item Passing from a simplicial set to the topology of its
geometric realisation (\cite{GeMa03}, I.2, Definition 1, p. 6).
\end{enumerate}
 \vspace{3pt}

In order to enrich simplicial algebra with information geometry,
it is necessary to use the categorical lift of simplicial constructions from the category
of finite sets to a category $CAP$. We hope to return to this challenge later.
 \vspace{3pt}
 
%%%%%%%%%%%%%%%%%%%%%%%%%%%%%%%%%%%%%%%%%%%

\section{Statistical manifolds and paracomplex structures}\label{S:statman}
\vspace{3pt}

\subsection{Paracomplex geometry}
\

 The algebra of paracomplex numbers (cf.~\cite{CrFoGa96}) is defined as the real vector space $\fC = \R\oplus \R$ with the multiplication
\[
(x,y) \cdot (x',y') = (xx' + yy', xy' + yx'). 
\]
Put $\varepsilon : =(0,1)$.  Then $\varepsilon^2 =1$, and moreover
\[
\fC =\R+\e\R= \{z=x+\e y \, |\, x,y \in \R \}.
\]
Given a paracomplex number $z_{+} = x+\varepsilon y$, its conjugate is defined by $z_{-}:= x-\e y$.
We denote by $\fC^* = \{x+\e y\, |\, x^2 -y^2 \ne 0 \}$ the group of invertible elements of $\fC$.
\vspace{3pt}

Let $E_{2m}$ be a $2m$-dimensional real affine space. {\it A paracomplex structure} on $E_{2m}$ is an endomorphism $\fK: E_{2m} \to E_{2m}$ such that $\fK^2=I$,
and the eigenspaces $E_{2m}^+, E_{2m}^-$ of $\fK$ with eigenvalues $1,-1$ respectively, have the same dimension. 
The pair $(E_{2m},\fK)$ will be called a {\it paracomplex affine space.} 
\vspace{3pt}

Finally, {\it a paracomplex manifold} is a real manifold $M$ endowed with a paracomplex structure $\fK$ that admits an atlas of paraholomorphic coordinates (which are functions with values in the algebra $\fC = \R + \e\R$ defined above), such that the transition functions are paraholomorphic.
\vspace{3pt}

Explicitly, this means the existence of local coordinates $(z_+^\alpha, z_-^\alpha),\, \alpha = 1\dots, m$ such that
paracomplex decomposition of the local tangent fields is of the form
\[
T^{+}M=span \left\{ \frac{\partial}{\partial z_{+}^{\alpha}},\, \alpha =1,...,m\right\} ,
\]
\[
T^{-}M=span \left\{\frac{\partial}{\partial z_{-}^{\alpha}}\, ,\, \alpha =1,...,m\right\} .
\]
Such coordinates are called {\it adapted coordinates} for the paracomplex structure $\fK$.
\vspace{3pt}

If $E_{2m}$ is already endowed with a paracomplex structure $\fK$ as above,
we define {\it the paracomplexification of $E_{2m}$} as $E_{2m}^\fC = E_{2m} \otimes_{\R} \fC$ and we extend $\fK$ to a $\fC$-linear endomorphism $\fK$ of $E_{2m}^\fC$. Then, by setting
\[
E_{2m}^{1,0} = \{v\in V^\fC \, |\, \fK v=\e v\}=\{v+\e\fK v\, |\, v \in E_{2m}\},
\]
\[
E_{2m}^{0,1} = \{v\in V^\fC \, |\, \fK v= -\e v\}=\{v-\e\fK v\, |\, v\in E_{2m}\},
\]
we obtain $E_{2m}^\fC =E_{2m}^{1,0}\oplus E_{2m}^{0,1}$.
\vspace{3pt}

 We associate with any adapted coordinate system $(z_{+}^{\alpha}, z_{-}^{\alpha})$ a paraholomorphic coordinate system $z^{\alpha}$ by 
\[
z^\alpha\, =\, \frac{z_{+}^{\alpha}+z_{-}^{\alpha}}{2} +\e\frac{z_{+}^{\alpha}-z_{-}^{\alpha}}{2}, \alpha=1,...,m .
\]
\vspace{3pt}
 
We define the paracomplex tangent bundle as the $\R$-tensor product $T^\fC M = TM \otimes \fC$ and we extend the endomorphism $\fK$ to a $\fC$-linear endomorphism of $T^\fC M$. For any $p \in M$, we have the following decomposition of $T_{p}^\fC M$:
\[
T_p^\fC M=T_p^{1,0}M \oplus T_p^{0,1}M\,
\]
where 
\[
T_p^{1,0}M = \{v\in T_p^\fC M | \fK v=\e v\}=\{v+\e \fK v| v \in E_{2m}\} ,
\] 
\[
T_p^{0,1}M = \{v\in T_p^\fC M | \fK v= -\e v\}=\{v-\e \fK v|v\in E_{2m}\}
\]
are the eigenspaces of $\mathfrak{K}$ with eigenvalues $\pm \e$.
The following paracomplex vectors 
\[
\frac{\partial}{\partial z_{+}^{\alpha}}=\frac{1}{2}\left(\frac{\partial}{\partial x^{\alpha}} + \e\frac{\partial}{\partial y^{\alpha}}\right),\quad \frac{\partial}{\partial{z}_{-}^{\alpha}}=\frac{1}{2}\left(\frac{\partial}{\partial x^{\alpha}} - \e\frac{\partial}{\partial y^{\alpha}}\right)
\]
form a basis of the spaces $T_p^{1,0}M$ and $T_p^{0,1}M$.
\vspace{3pt}
 
Useful constructions from the theory of paracomplex differential forms are collected in~\cite{AlMeTo09}, as well as~\cite{CoMaSa04}, \cite{CoMaMoSa05} , \cite{Lib52}).
 In particular, one can define the Dolbeault paracomplex (see~\cite{CoMaSa04} for details).
\vspace{3pt}

\subsection{Convex cones and paracomplex geometry}
\

Before applying this machinery to the spaces of probability distributions on finite sets
(cf. Sec.~\ref{S:bcfd}), we should explain why we cannot extend it to the more general setting
of (subspaces of) finite--dimensional convex cones. 
\vspace{3pt}

The main reason is this: in order to establish the connection with $F$--manifolds,
we need to have a paracomplex analogue of Theorem~\ref{Th:sqf} in which real 
differential forms and Riemannian metrics would be replaced by their
paracomplex versions. But it turns out, that this is possible only
for a narrow subclass of convex cones that unmistakably 
singles out probability distributions on finite sets. 
\vspace{3pt}

This subclass is the last one in the Vinberg's list of such cones
that are irreducible ones with respect to direct sums (cf. \cite{Vi60}, \cite{Vi63}, \cite{Vi65}).
\vspace{3pt}

\begin{proposition}~\label{P:Vclass}
Each irreducible homogeneous self--dual cone belongs to one
of the following classes:
\vspace{3pt}

\begin{enumerate}
\item The cone $M_+(n, \R)$ of $n \times n$ real positive matrices.
\vspace{3pt}

\item The cone $M_{+}(n, \C)$ of $n \times n$ complex positive matrices.
\vspace{3pt}

\item The cone $M_{+}(n, \H)$ of $n \times n$ quaternionic positive matrices.
\vspace{3pt}

\item The cone $M_{+}(3, \O)$ of $3 \times 3$ positive matrices whose elements are in $\O$, the Cayley algebra (also known as the Octonionic algebra).
\vspace{3pt}

\item The cone $M_{+}(n, \fC)$ of $n \times n$ paracomplex positive matrices. 
\end{enumerate}
\end{proposition}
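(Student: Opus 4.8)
The plan is to deduce the statement from the Koecher--Vinberg correspondence between irreducible homogeneous self--dual cones and simple Euclidean (formally real) Jordan algebras, and then to invoke the classical Jordan--von Neumann--Wigner classification of the latter. First I would reconstruct an algebraic structure directly from the cone, using the very data that already appear in Theorem~\ref{Th:sqf}. Let $V\subset R$ be an open convex homogeneous cone with characteristic function $\varphi_{_V}$. Its Hessian $g_{ij}=\partial^2\ln\varphi_{_V}/\partial x^i\partial x^j$ furnishes a canonical flat--affine Riemannian metric, and self--duality of $V$ with respect to this metric singles out a distinguished interior point $e$ fixed by the isotropy group, which I would take as the candidate identity. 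Bilinearizing the cubic form $\partial^3\ln\varphi_{_V}$ against $g$ --- which is precisely the composition $\circ$ of Theorem~\ref{Th:sqf}(3) --- and recentering at $e$ produces on $T_eV\cong R$ a commutative product $\bullet$. The content of the Koecher--Vinberg theorem is that self--duality forces $\bullet$ to satisfy the Jordan identity and to be formally real ($\sum_i a_i\bullet a_i=0$ implies all $a_i=0$), and that $V$ is recovered as the connected component of $e$ in the set of invertible squares. Under this dictionary, an orthogonal direct--sum decomposition of $V$ corresponds to a decomposition of the Jordan algebra into ideals, so \emph{irreducible} cones correspond exactly to \emph{simple} Euclidean Jordan algebras.

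Next I would classify the simple Euclidean Jordan algebras $J$. Choosing a Jordan frame, i.e. a maximal orthogonal system of primitive idempotents $c_1,\dots,c_r$ with $r=\operatorname{rank}J$, and the associated Peirce decomposition $J=\bigoplus_{i\le j}J_{ij}$, one shows that the off--diagonal Peirce spaces $J_{ij}$ ($i<j$) all carry isomorphic structures of a single ``coordinate'' composition algebra $\mathbb{K}$, and that $J$ is then the algebra of $\mathbb{K}$--Hermitian $r\times r$ matrices. Hurwitz's theorem restricts $\mathbb{K}$ to $\R$, $\C$, $\H$, $\O$, or the two--dimensional split algebra with $\e^2=+1$ --- which is precisely the paracomplex algebra $\fC=\R+\e\R$ of the present paper. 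Rank $r\ge 3$ combined with the non--associativity of $\O$ forces the octonionic case to occur only at $r=3$, since the associativity needed to realize matrix multiplication for $r\ge 4$ fails over $\O$, while the remaining coordinate algebras are admissible for all $r$; the residual rank $\le 2$ cases reduce to spin factors absorbed into these families. Matching the five surviving possibilities with classes (1)--(5) completes the argument.

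The hard part will be the rigidity (coordinatization) step: proving that a simple Euclidean Jordan algebra of rank $\ge 3$ must be a Hermitian matrix algebra over a composition algebra, and that the octonions are admissible only at rank $3$. This is the technical heart of the Jordan--von Neumann--Wigner theorem and rests on a careful Peirce calculus together with Hurwitz's classification; the self--duality/formal--reality hypothesis is what guarantees the coordinate algebra is one of the normed (or split normed) algebras rather than an arbitrary associative ring. A secondary and more delicate point, specific to this paper, is the appearance of the paracomplex family~(5): one must verify that the split composition algebra $\fC$ genuinely arises as a coordinate algebra for a bona fide self--dual cone, and that, as asserted in the surrounding text, this is exactly the family realized by the cones generated by probability simplices. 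Establishing this identification --- rather than the abstract classification, which is classical --- is where the real content of the paracomplex reinterpretation lies, and I would expect it to require the explicit paracomplex analogue of Theorem~\ref{Th:sqf} that Section~\ref{S:statman} is designed to supply.
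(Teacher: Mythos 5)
First, a remark on what your proposal is being measured against: the paper contains no proof of this proposition at all. It is presented as Vinberg's classification, with pointers to~\cite{Vi60},~\cite{Vi63},~\cite{Vi65}, and the companion Theorem~\ref{Th:iffea} relating the list to formally real Jordan algebras is likewise ``proved'' by citation. So the only comparison available is with the classical theory in those references, which is indeed the route you chose (Koecher--Vinberg correspondence plus Jordan--von Neumann--Wigner coordinatization), and your first paragraph sets that up correctly. The gap is in the coordinatization step. Formal reality forces the norm form of the coordinate composition algebra to be positive definite (anisotropic) --- this is precisely what the Peirce calculus uses to exclude zero divisors in the off-diagonal Peirce spaces --- so Hurwitz leaves only the division algebras $\R$, $\C$, $\H$ (any rank) and $\O$ (rank $3$). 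Your claim that the split two-dimensional algebra $\fC$ also survives and is ``admissible for all $r$'' is false: the norm $x^2-y^2$ of $\fC$ is isotropic, and concretely the Jordan algebra of $\fC$-Hermitian matrices is not formally real once $n\ge 2$. Indeed $P=\left(\begin{smallmatrix}0&\e\\ -\e&0\end{smallmatrix}\right)$ is $\fC$-Hermitian (recall $\overline{\e}=-\e$), yet $P\circ P+I\circ I=P^2+I^2=-I+I=0$ with $P,I\ne 0$; equivalently, the map $A+\e B\mapsto A+B$ (with $A$ symmetric, $B$ antisymmetric) identifies $\mathrm{Herm}(n,\fC)$ with the Jordan algebra $M_n(\R)^{+}$ of \emph{all} real $n\times n$ matrices under the symmetrized product, which is not formally real. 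So class (5) cannot come out of the Jordan--von Neumann--Wigner argument as you have arranged it.

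What the classical argument actually yields as the fifth family is the rank-two spin factors $\R\oplus\R^{k}$, i.e.\ the Lorentz (spherical) cones; these are \emph{not} ``absorbed'' into the matrix families --- only the low-dimensional coincidences $\mathrm{Herm}(2,\R)$, $\mathrm{Herm}(2,\C)$, $\mathrm{Herm}(2,\H)$, $\mathrm{Herm}(2,\O)$, of dimensions $3,4,6,10$, are spin factors --- so dismissing rank $\le 2$ as ``residual'' discards an entire infinite family. Thus your proposal, carried out rigorously, proves Vinberg's standard list: the cones (1)--(4) together with the spherical cones, which differs from the proposition precisely at item (5). The identification of that fifth classical family with what the paper calls paracomplex positive matrices is the entire non-classical content of the statement; you do flag this, but as a ``secondary and more delicate point'' to be deferred, whereas it is in fact the missing step, and without it your argument establishes a different statement from the one to be proved. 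Be aware that the paper cannot help you close this gap: its own justification is the citation of Vinberg, whose fifth class is the spherical cone, and no bridge between the two formulations is proved anywhere in the text.
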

\vspace{3pt}
Recall that a matrix is positive if it is self-adjoint and its eigenvalues are positive.

\vspace{3pt}

\begin{definition} The structure of Jordan algebras~\cite{JoNeWi34} on a real linear space $\fM$
is determined by two polylinear operations:
\vspace{3pt}

\begin{enumerate}
\item binary multiplication $(a,b) \to a\cdot b$,
\vspace{3pt}

\item ternary multiplication $(a,b,c)\to a(bc)$,
satisfying the compatibility axiom
\[
 a\cdot ((a\cdot a)\cdot b) = (a\cdot a)\cdot (a\cdot b) .
\]

Such an algebra is called formally real if from $ \sum_{i=1}^n a_i\cdot a_i =0$ it follows
that all $a_i=0$.
\end{enumerate}
\end{definition}
\vspace{3pt}

\begin{theorem}~\label{Th:iffea}
\ 

\begin{enumerate}
\item The list of algebras in Proposition~\ref{P:Vclass} coincides with the list
of all irreducible finite dimensional formally real algebras.
\vspace{3pt}

\item The irreducible homogeneous self-dual cone associated with such an algebra
$\fM$ is the set of positive elements of a Jordan algebra, i.e. elements represented by positive matrices.
\end{enumerate}
\end{theorem}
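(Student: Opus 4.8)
The plan is to recognise this as the classical Jordan--von Neumann--Wigner classification of formally real Jordan algebras (part (1)) together with the Koecher--Vinberg correspondence between such algebras and homogeneous self-dual cones (part (2)), and to organise the whole argument around the spectral and Peirce structure theory of Euclidean Jordan algebras.

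For part (1), I would first upgrade the formal reality hypothesis into analytic structure. On a formally real Jordan algebra $\fM$ the trace form $\langle a,b\rangle := \mathrm{Tr}(L_{a\cdot b})$, where $L_a$ is multiplication by $a$, is a positive-definite symmetric bilinear form with respect to which every $L_a$ is self-adjoint; thus $\fM$ is a Euclidean Jordan algebra and each element admits a spectral decomposition $a = \sum_i \lambda_i c_i$ with real $\lambda_i$ and orthogonal idempotents $c_i$. Fixing a Jordan frame $c_1,\dots,c_r$ of primitive orthogonal idempotents summing to the identity defines the rank $r$, and I would invoke the Peirce decomposition $\fM = \bigoplus_{i\le j} V_{ij}$, using primitivity together with formal reality to force $V_{ii} = \R\,c_i$.

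The core step is the coordinatisation: the off-diagonal Peirce spaces $V_{ij}$ with $i<j$ all share a common dimension, and for rank $r \ge 3$ one extracts from them a real composition (hence alternative, possibly non-associative) division algebra $\mathbb{A}$ carrying a positive-definite norm, with $\fM$ isomorphic to the algebra $\mathrm{Herm}(r,\mathbb{A})$ of $\mathbb{A}$-Hermitian matrices under the anticommutator product. Hurwitz's theorem then restricts $\mathbb{A}$ to $\R,\C,\H,\O$, and the failure of associativity in $\O$ forces $r=3$ in the octonionic case, reproducing exactly the list of Proposition~\ref{P:Vclass}; the low-rank cases $r\le 2$ are treated directly and account for the remaining (spin-factor / paracomplex) entry. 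For part (2), I would realise, for each $\fM$ in the list, the associated cone as the set $\Omega$ of positive elements, namely those whose spectral decomposition has strictly positive eigenvalues, equivalently the interior of the set of squares, equivalently the elements represented by positive self-adjoint matrices as in the Proposition. Using the quadratic representation $P(x) = 2L_x^2 - L_{x^2}$ one shows the structure group acts transitively on $\Omega$ (homogeneity), while the Jordan inversion $x\mapsto x^{-1}$, an involutive isometry of $\Omega$ fixing the identity, together with the trace inner product exhibits $\Omega$ as self-dual; this is precisely the Koecher--Vinberg statement, matching each algebra of (1) with its cone, and conversely showing every irreducible homogeneous self-dual cone arises this way.

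The main obstacle will be the coordinatisation step in (1): extracting the composition algebra $\mathbb{A}$ from the Peirce spaces and proving $\fM \cong \mathrm{Herm}(r,\mathbb{A})$ is the technical heart of the whole theorem. In addition, the exceptional octonionic algebra, which is Jordan only in rank $3$, and the reconciliation of the genuine low-rank spin factor with the paracomplex entry $M_+(n,\fC)$ of the stated list will require care, since paracomplex Hermitian matrices are not a division-algebra case and must be matched to the remaining irreducible cone by a separate identification rather than by the Hurwitz dichotomy.
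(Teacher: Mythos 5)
Your strategy is, in substance, exactly what the paper's own ``proof'' points to: the paper gives no argument at all, only the citation to \cite{Vi60} and \cite{Vi65}, and those sources (together with \cite{JoNeWi34}) establish the result along precisely the lines you sketch --- trace form making $\fM$ Euclidean, spectral decomposition and Peirce coordinatisation, Hurwitz's theorem with the rank-$3$ restriction for $\O$, and the Koecher--Vinberg correspondence via the quadratic representation for part (2). So, viewed as a reconstruction of the classical classification that the paper outsources, your outline is correct and follows the same route.

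However, the ``obstacle'' you flag in your last paragraph is not a technicality requiring care; it is a genuine gap, and it cannot be closed. Your argument, carried out correctly, terminates in the list $\mathrm{Herm}(r,\R)$, $\mathrm{Herm}(r,\C)$, $\mathrm{Herm}(r,\H)$, $\mathrm{Herm}(3,\O)$, plus the spin factors, whose cones are the Lorentz (spherical) cones --- and that is the list actually appearing in \cite{Vi60}, \cite{Vi65}. It does not terminate in the list of Proposition~\ref{P:Vclass}, because the fifth entry there, $M_{+}(n,\fC)$, is not of the required type: a paracomplex-Hermitian matrix is exactly $X+\e Y$ with $X$ symmetric and $Y$ skew-symmetric, and the linear map $X+\e Y\mapsto X+Y$ identifies $\mathrm{Herm}(n,\fC)$, with its anticommutator product, with the Jordan algebra $M_n(\R)^{+}$ of \emph{all} real $n\times n$ matrices. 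For $n\ge 2$ this algebra is not formally real: taking $A$ to be the block matrix with $\left(\begin{smallmatrix}0&1\\-1&0\end{smallmatrix}\right)$ in the upper left corner and zeros elsewhere, and $B$ the corresponding block with the $2\times 2$ identity, one gets $A\cdot A+B\cdot B=0$ with $A,B\neq 0$. Correspondingly, the determinant form of $\mathrm{Herm}(2,\fC)$ has split signature $(2,2)$ rather than Lorentzian signature, so the associated set of ``positive'' elements is not a proper open convex cone, let alone a self-dual one. Hence the ``separate identification'' you defer --- matching the spin factor with $M_{+}(n,\fC)$ --- does not exist for any $n\ge 2$; the only coincidence is the trivial rank-one case. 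In short, your proposal proves the Jordan--von Neumann--Wigner/Vinberg theorem, whose fifth family is the Lorentz cones, and the mismatch with the statement under review is a defect of the statement (inherited from Proposition~\ref{P:Vclass}), not something your coordinatisation argument can repair; a complete write-up would have to either replace item (5) of the list by the Lorentz cones or restrict the paracomplex claim to the degenerate case where it is vacuous.
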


\begin{proof} For the proof of this theorem, we refer directly to ~\cite{Vi60} and \cite{Vi65}.
\end{proof}
\vspace{3pt}

From the works studying affine spaces over an algebra of finite rank~\cite{Ca27}, \cite{No63}, \cite{Ro49, Ro97}, \cite{Sh02}, we have the following statement:
 \begin{proposition}\label{P:rank2}
Consider an affine, symmetric space over a Jordan algebra. There exists exactly two affine and flat connections on this space if and only if the algebra is of rank 2, and generated by $\{1, \e\}$ with $\e^2= 1$ or $-1$.
\end{proposition}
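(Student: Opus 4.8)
The plan is to reduce the enumeration of flat connections to the spectral analysis of a single endomorphism, working inside the theory of affine spaces over an algebra developed in \cite{Ca27}, \cite{No63}, \cite{Ro49, Ro97}, \cite{Sh02}. Write $A = \R[\e]$ for the algebra in question, realise the symmetric space as a domain in a module over $A$, and let $\fK$ be the endomorphism of $\cT_M$ given by multiplication by $\e$, so that $\fK^2 = c\cdot I$ when $\e^2 = c$. The ambient linear structure supplies one distinguished flat affine connection $D$, and the characteristic function of Theorem~\ref{Th:sqf} supplies its metric dual $D^*$. The first step is to show, using the $A$-linearity of parallel transport together with invariance under the geodesic symmetries, that \emph{every} compatible flat affine connection must preserve the eigenbundle decomposition of $\fK$; this reduces the problem to counting the eigenbundles of $\fK$.

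The heart of the argument is the elementary observation that the spectrum of $\fK$ is fixed by the minimal polynomial $t^2 - c$ of the generator $\e$. For $c \ne 0$, normalised to $c = \pm 1$, this polynomial is separable with two distinct roots, $\fK$ is semisimple, and $\cT_M$ splits into exactly two eigenbundles: the real splitting $\cT_M = T^+M \oplus T^-M$ when $\e^2 = 1$, and the pair $T^{1,0}M \oplus T^{0,1}M$ after complexification when $\e^2 = -1$. The eigenprojections $\tfrac12(I \pm \fK)$ single out precisely the two flat connections $D$ and $D^*$, manifestly flat in the primal and dual affine coordinates respectively; the involution $\e \mapsto -\e$ interchanges them, realising the duality that in the statistical picture exchanges the exponential and mixture connections. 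This settles the ``if'' direction and exhibits the two connections.

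For the converse I would eliminate every other algebra by the same bookkeeping. If $A = \R$ (rank $1$) then $\fK$ is scalar, no splitting occurs, and only $D = D^*$ remains. The other rank-$2$ possibility $c = 0$ yields the dual numbers: $\e$ is nilpotent, $\fK$ fails to be semisimple, there is a single generalised eigenbundle rather than a direct sum, and the automorphisms $\e \mapsto \lambda\e$, $\lambda \in \R^\times$, act nontrivially on the compatible flat connections, producing a continuum rather than a two-element set. Any monogenic algebra $A = \R[t]/(p(t))$ of rank $\ge 3$ with $p$ separable equips $\fK$ with at least three distinct eigenvalues, hence more than two eigenbundles and more than two flat connections, while the non-separable higher-rank cases reintroduce nilpotents and a continuum exactly as for the dual numbers. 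Collating these, the count equals two if and only if the minimal polynomial is a separable quadratic, that is $\e^2 = \pm 1$.

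The step I expect to be the genuine obstacle is the reduction claimed in the first paragraph: proving that flatness together with compatibility with the symmetric structure forces a connection to diagonalise along with $\fK$, so that $D$ and $D^*$ are the \emph{only} flat connections and not merely two convenient ones. This is precisely where semisimplicity of $A$ (equivalently $c \ne 0$) is indispensable, since the absence of nilpotents is what rigidifies the space of compatible flat connections from a continuum down to the finite eigenbundle count; making this rigorous within the formalism of \cite{Sh02} is the technical core of the proof.
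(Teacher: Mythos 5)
Your proposal takes a different route from the paper's, but it contains a genuine gap --- the very one you flagged yourself in your last paragraph. Everything in your argument funnels through the claim that counting compatible flat affine connections is the same as counting eigenbundles of the multiplication operator $\fK$, i.e.\ that every flat connection compatible with the structure must preserve the eigenbundle decomposition. You never prove this, and without it neither direction survives: the word ``exactly'' in the forward direction, the elimination of the dual numbers $\e^2=0$ (your ``continuum'' of flat connections is asserted, not derived), and the elimination of algebras of rank $\geq 3$ all rest on this unproven equivalence. The paper's proof avoids this issue entirely by a different mechanism: it works with the algebra-valued Christoffel symbols $\Gamma^i_{jk}=\Gamma^{i\alpha}_{jk}e_\alpha \in A$ and their real components $\Gamma^{(i,\alpha)}_{(j,\beta)(k,\gamma)}=\Gamma^{is}_{jk}C^{\delta}_{s\beta}C^{\alpha}_{\delta\gamma}$ expressed through the structure constants of $A$. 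In the forward direction the splitting of $A$ splits the parallel-transport equation $d\bw+\Gamma\bw\,d\bx=0$ into two component equations, each defining one flat affine connection; in the converse, the real connections obtained this way are indexed by the generators of $A$ (the range of the summation index $s$), so the existence of exactly two connections forces exactly two generators. Counting generators via structure constants is a cruder but self-contained bookkeeping that never needs your diagonalisation lemma.

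There is also a secondary defect in your construction even where it is meant to be easy: the mechanism by which an eigenprojection $\tfrac12(I\pm\fK)$ ``singles out'' a flat connection is never defined, and it cannot be made literal in the case $\e^2=-1$. There $\fK$ has no real eigenvectors; the bundles $T^{1,0}M$ and $T^{0,1}M$ exist only after tensoring with the algebra, and the projections $\tfrac12(I\mp\e\fK)$ are not real endomorphisms, so they do not directly produce two real flat connections. Your text treats the paracomplex and complex cases as parallel, but the eigenprojection device only works as stated for $\e^2=+1$; in the paper's approach both cases are handled uniformly because the two connections are the two components $\Gamma^{(\alpha)}$ of a single algebra-valued connection, not eigenbundle restrictions. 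To your credit, your attempt to exclude the dual numbers addresses a case the paper's own proof is silent about; but as it stands that exclusion, like the central reduction, is a plan rather than a proof.
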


In the case where $\e^{2}=-1$, we have a complex structure. Similarly, if $\e^{2}=1$ we have a paracomplex structure .

\begin{proof}
1) Suppose that we are working on an affine space over a Jordan algebra $A$ of rank two, with basis elements $\{e_{1},e_{2}\}=\{1,\epsilon\}$. The affine representation of the algebra, or free module $AE_{n}$, admits a real interpretation in the affine space $E_{2n}$(\ \cite{Ro97}, section 2.1.2). In this interpretation each vector $\bx = (x^{i})\in AE_{n}$ with coordinates $x^{i}=x^{(i,\alpha)} e_{\alpha}$, is interpreted as the vector $\bx=(x^{(i,\alpha)}) \in E_{2m}$. 

\smallskip 

Let us introduce a parametrizable curve $ x^{i} = x^{i}(t) \in E_{2n}$, and a tangent vector $\bw$ to it, at a given point. Our aim is to proceed to the parallel transport of this vector, along that curve in $E_{2n}$. We have a parallel transport of $\bw$ along of the curve $x^{i}= x^{i}(t)$ given by:

\[d\bw+ \Gamma \bw d\bx=0,\] and $\Gamma$ is an affine connection for the $A$-space.
Because of the splitting property, we can write the parallel transport equation in the following way:

\[d(\bw^{(1)}\oplus \bw^{(2)}) +(\Gamma^{(1)}\oplus\Gamma^{(2)})(\bw^{(1)}\oplus \bw^{(2)})d(\bx^{(1)}\oplus \bx^{(2)})=0,\] 
therefore, giving us:
\[
d\bw^{(\alpha)}+ \Gamma^{(\alpha)}\bw^{(\alpha)}d\bx^{(\alpha)} = 0 ,\quad \alpha\in \{1,2\}.
\]
Therefore, we can define an affine connection in $E_{2n}$ having two components with respect to the local coordinates $x^{(i,\alpha)}$.

\smallskip
2) Consider an affine and symmetric space over a Jordan algebra $A$, and suppose that there are 2 flat, affine connections on this space. 
These flat affine connections are constructed from a field of objects, having components: 
\[\Gamma_{jk}^{i}=\Gamma_{jk}^{i \alpha}e_{\alpha} \in A.\]

\smallskip
Suppose that $\bv^{i}=\bv^{(i,\alpha)}e_{\alpha}$ are quantities from the algebra corresponding to a tangent vector $\bv$. 
Then, from the following condition
\[d\bv^{i} + \Gamma^{i}_{j k}\bv^{j} d\bx^{k}= 0,\]
we can define an affine connection in the affine space $E^{2n}$ having the following components
\[\Gamma^{(i,\alpha)}_{(j,\beta)(k,\gamma)}=\Gamma_{j k}^{i s}C_{s \beta}^\delta C^{\alpha}_{\delta \gamma} ,\]
where the $C_{ \beta \gamma}^{\alpha}$ are structure constants of algebra $A$, with respect to the local adapted coordinates $x^{(\alpha,i)}$. 
Now, these objects are indexed by the number of generators of the algebra $A$. Since there exist 2 connections, it impies that $s\in\{1,2\}$ and so that the number of generators of the algebra $A$ is 2. 
\end{proof}

\medskip

\subsection{Projective space and paracomplex structure}\label{S:4.3}
Let $X_{d}$ be a $d$-dimensional surface of the $n$-dimensional (real or complex) projective space $\cP^{n}$ with $d\leq n$.
\begin{definition}
The surface $X_{d}$ is said to be normalized if, at each point $p\in X_{d}$, are associated the two following hyperplanes:
\begin{enumerate}
\item Normal of first type, $P_{I}$, of dimension $n-d$, and intersecting the tangent $d$-plane $T_{p}X_{d}$ at a unique point $p \in X_{d}$. 
\item Normal of second type, $P_{II}$, of dimension $d-1$, and included in the $d$-plane $T_{p}X_d$, not meeting the point $p$.
\end{enumerate}
\end{definition}

This decomposition expresses the duality of projective space. In particular, in the limit case, where $d=n$, then $P_{I}$ is reduced to the point $p$ and $P_{II}$ is the $(n-1)$-surface which does not contain the point $p$. 
This property is nothing but the usual duality of projective space. Note that in this case, $X_{n}$ can be identified with the projective space $\cP^{n}$.

\begin{definition}\label{D:mpairs}
A pair consisting of an $m$-plane and an $(n-m-1)$-plane is called an $m$-pair. 
\end{definition}
\begin{remark} The $0$-pair can be identified with the projective space $\cP^{n}$.\end{remark}
\vspace{5pt}

From~\cite{No47,Sh87}, for normalized surfaces associated to an $m$-pair space, the following properties holds:
\medskip
\begin{lemma}\label{L:pairs}
\ 

\begin{enumerate}
\item The space of $m$-pairs is a projective, differentiable manifold.
\item For any integer $m\geq 0$, a manifold of $m$-pairs contains 2 flat, affine and symmetric connections. 
\end{enumerate}
\end{lemma}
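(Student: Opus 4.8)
The plan is to prove the two assertions separately: first to realise the $m$--pair space as a homogeneous space of the projective group (which supplies the differentiable and projective structure), and then to read off its two flat affine connections from the rank--two paracomplex structure already analysed in Proposition~\ref{P:rank2}.

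For assertion (1), I would write $\cP^n=\cP(V)$ with $\dim_{\R}V=n+1$. An $m$--plane is an $(m+1)$--dimensional subspace $U\subset V$ and an $(n-m-1)$--plane is an $(n-m)$--dimensional subspace $W\subset V$; since $\dim U+\dim W=n+1$, a generic $m$--pair is a direct sum decomposition $V=U\oplus W$ (Definition~\ref{D:mpairs}). The $m$--planes form the Grassmannian $G(m+1,V)$ and the $(n-m-1)$--planes form $G(n-m,V)$, both smooth projective varieties, so the manifold of $m$--pairs is the locus $\{(U,W)\,|\,U\cap W=0\}$ inside $G(m+1,V)\times G(n-m,V)$. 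Transversality $U\cap W=0$ is an open condition, whence this locus is a smooth, (quasi--)projective differentiable manifold. Equivalently, $PGL(V)$ acts transitively on such decompositions with stabiliser the image of $GL(U)\times GL(W)$, so the $m$--pair space is the reductive (indeed affine symmetric) homogeneous space $PGL(n+1)/P\big(GL(m+1)\times GL(n-m)\big)$; this exhibits both the differentiable and the projective structure at once.

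For assertion (2), I would turn the decomposition into a paracomplex structure. At a point $V=U\oplus W$ the tangent space is canonically $\mathrm{Hom}(U,W)\oplus\mathrm{Hom}(W,U)$; define $\fK$ to be $+1$ on the first summand and $-1$ on the second. Then $\fK^2=I$ and the two eigenspaces have equal dimension $(m+1)(n-m)$, so $\fK$ is a paracomplex structure and the tangent bundle becomes a module over the algebra $\fC=\R+\e\R$, $\e^2=1$, generated by $\{1,\e\}$. Since the space is moreover an affine symmetric space (the decomposition is the $\pm1$--eigenspace datum of an involution of $V$), the manifold of $m$--pairs is an affine symmetric space over a Jordan algebra of exactly the rank--two type singled out in Proposition~\ref{P:rank2}. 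That proposition then yields exactly two flat affine connections; concretely they are the dual pair whose geodesics are, respectively, the two rulings $\mathrm{Hom}(U,W)$ and $\mathrm{Hom}(W,U)$, i.e. the connections produced by Norden's normalisation of a surface in $\cP^n$ (\cite{No47},\cite{Sh87}), one induced by the normal of first type $P_{I}$ and the other by the normal of second type $P_{II}$, projective duality interchanging them.

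The main obstacle is the verification that these two connections are genuinely flat and torsion--free and that there are \emph{exactly} two of them. I would exploit homogeneity: both connections are $PGL(V)$--equivariant, being built functorially from the decomposition, so it suffices to compute curvature and torsion at the base point $V=U_0\oplus W_0$, where the problem reduces to linear algebra in $\mathrm{Hom}(U_0,W_0)\oplus\mathrm{Hom}(W_0,U_0)$; in the affine big cell $\mathrm{Hom}(U_0,W_0)$ the $P_{II}$--connection is simply the standard flat connection of that affine space, so flatness and symmetry are immediate, and the dual cell gives the second connection. The counting ``exactly two'' is then controlled by Proposition~\ref{P:rank2}: each flat affine connection matches one generator of the structure algebra, and $\dim_{\R}\fC=2$ forces precisely two. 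The one genuinely delicate point is to confirm that the normalisation structure on the $m$--pair manifold is the rank--two \emph{paracomplex} case ($\e^2=+1$) rather than the complex one; this follows from the self--duality of an $m$--pair, in which the $m$--plane and the $(n-m-1)$--plane play symmetric, real roles, so that the distinguished endomorphism squares to $+I$.
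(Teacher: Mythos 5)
The paper offers no proof of this lemma at all: it is imported wholesale from Norden and Shirokov (\cite{No47}, \cite{Sh87}), so your proposal must be judged as a reconstruction of that classical material. Your part (1) is correct and standard: the pairs in general position form the open locus $\{(U,W)\,|\,U\cap W=0\}$ in $G(m+1,V)\times G(n-m,V)$, which is a smooth quasi-projective manifold, homogeneous under $PGL(V)$ with stabiliser $P(GL(U)\times GL(W))$. (Definition~\ref{D:mpairs} does not literally impose general position, and your honest ``quasi-projective'' is what one actually gets, but this is the intended object.)

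The genuine gap is in part (2), and it breaks your strategy rather than just leaving a detail unchecked. You assert that the two flat connections are $PGL(V)$-equivariant, ``being built functorially from the decomposition'', and you reduce flatness and symmetry to a computation at the base point. But by Nomizu's classification, $PGL(V)$-invariant affine connections on the reductive homogeneous space $PGL(V)/P(GL(U)\times GL(W))$ correspond to $\mathrm{Ad}(H)$-equivariant bilinear maps $\mathfrak{m}\otimes\mathfrak{m}\to\mathfrak{m}$, where $\mathfrak{m}=\mathrm{Hom}(U,W)\oplus\mathrm{Hom}(W,U)$. The central torus of $H$ acts on $\mathrm{Hom}(U,W)$ by $\lambda=t/s$ and on $\mathrm{Hom}(W,U)$ by $\lambda^{-1}$, and a weight count ($\lambda^{2},1,\lambda^{-2}$ against targets $\lambda^{\pm1}$) shows the only equivariant bilinear map is zero. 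Hence the \emph{only} invariant connection is the canonical symmetric-space connection, whose curvature $R(X,Y)Z=-[[X,Y],Z]$ is nonzero (take rank-one $\phi\in\mathrm{Hom}(U,W)$, $\psi\in\mathrm{Hom}(W,U)$ with $\phi\psi\phi\neq0$). So a $PGL(V)$-equivariant flat torsion-free connection on the $m$-pair manifold does not exist: if your two connections were equivariant they could not be flat, and your base-point reduction cannot be repaired.

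What your construction really yields is weaker than what you claim, though it is the correct classical content. The two eigenfoliations of $\fK$ have leaves (fix $W$, let $U$ range over its complements, and dually) that are affine spaces $\mathrm{Hom}(U,W)$, each carrying its standard flat symmetric affine structure; prescribing geodesics along ``the two rulings'' and the standard structure on each half-dimensional leaf pins down only a \emph{partial} connection along each foliation, not a connection on the $2(m+1)(n-m)$-dimensional manifold. This foliated reading is exactly how the lemma is used later in the paper (``$\cH$ contains two distinct real projective flat subspaces''), and it is what the Norden--Shirokov theory supplies: in adapted paraholomorphic charts the $\fC$-linear connection splits into two flat component connections (the mechanism of Proposition~\ref{P:rank2}), invariant only under the subgroup preserving the adapted structure, never under all of $PGL(V)$. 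To complete your argument you would need either to state and prove the lemma at the level of partial connections/foliations, or to build the two connections chart-wise from Proposition~\ref{P:rank2} — after verifying, which you only assert, that the pair manifold is a symmetric space \emph{over} the algebra $\fC$ in the sense required there — and then drop the global equivariance claim entirely.
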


In particular, this leads to the the following proposition:
\begin{proposition}\label{P:isome}
The space of $0$-pairs in the projective space $\cP^n$ is isometric to the hermitian projective space over the algebra of paracomplex number.
\end{proposition}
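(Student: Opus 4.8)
The plan is to realise both sides explicitly, match the underlying manifolds first, and then match the metrics. Throughout I use the idempotent description of $\fC$: setting $e_{\pm}:=\frac{1}{2}(1\pm\e)$ one has $e_{+}^{2}=e_{+}$, $e_{-}^{2}=e_{-}$, $e_{+}e_{-}=0$ and $e_{+}+e_{-}=1$, so $\fC\cong\R\times\R$ as a ring, while paracomplex conjugation $z_{+}\mapsto z_{-}$ sends $\e\mapsto-\e$ and hence interchanges $e_{+}$ and $e_{-}$. Writing a homogeneous vector $z=(z_{0},\dots,z_{n})\in\fC^{n+1}$ as $z=a\,e_{+}+b\,e_{-}$ with $a,b\in\R^{n+1}$, the para-hermitian form $h(z,z)=\sum_{i}z_{i}\bar z_{i}$ collapses to the \emph{real} quantity $\sum_{i}a_{i}b_{i}=a\cdot b$, since each term $z_{i}\bar z_{i}=a_{i}b_{i}$. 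This already singles out the non-null locus $h(z,z)\ne 0$ as the natural domain of the construction.

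First I would identify the underlying manifolds. A $0$-pair is a couple $(p,H)$ consisting of a point and a hyperplane of $\cP^{n}$ in general position, i.e. $p\notin H$; the set of such couples is the non-incident open subset of $\cP^{n}\times(\cP^{n})^{*}$. On the paracomplex side, the splitting $z=a\,e_{+}+b\,e_{-}$ descends, after quotienting by $\fC^{*}\cong\R^{*}\times\R^{*}$, to the pair $([a],[b])\in\cP^{n}\times\cP^{n}$, the second class being regular precisely when $b\ne 0$. Reading $[b]$ through projective duality as the hyperplane $H_{b}=\{x\,|\,b\cdot x=0\}$, the non-null condition $a\cdot b\ne 0$ becomes exactly $p=[a]\notin H_{b}$. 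This yields a canonical diffeomorphism between the space of $0$-pairs and the non-null locus of the paracomplex projective space $\cP^{n}(\fC)$, $PGL$-equivariant by construction; the complex base case runs in parallel with $\R$ replaced by $\C$.

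Next I would match the metrics. I would present the para-Fubini--Study metric through its para-Kähler potential: in the affine chart $z_{0}=1$, with coordinates $w_{\alpha}=z_{\alpha}=a_{\alpha}e_{+}+b_{\alpha}e_{-}$, one sets $K=\ln|h(z,z)|=\ln|1+\sum_{\alpha}a_{\alpha}b_{\alpha}|$ and $g_{\alpha\bar\beta}=\partial_{\alpha}\partial_{\bar\beta}K$, where $\partial_{\alpha},\partial_{\bar\beta}$ are the para-holomorphic and para-antiholomorphic derivatives. Splitting these through the idempotents ($\partial/\partial w=e_{+}\partial_{a}+e_{-}\partial_{b}$, and its conjugate) turns $g$ into a neutral-signature form that pairs the $a$-directions against the $b$-directions. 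The point to verify is that, transported by the diffeomorphism above, this form is precisely the canonical invariant metric on the space of $0$-pairs: the isotropy of a non-incident pair acts on the tangent space as the duality pairing between the deformations of $p$ and the deformations of $H$, whose two flat symmetric connections are exactly those supplied by Lemma~\ref{L:pairs}.

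The hard part will be this last metric verification, and in particular the bookkeeping of the split (neutral) signature intrinsic to para-hermitian geometry. Two points need care: fixing the normalisations of both metrics so that the identification is a genuine isometry rather than a conformal or constant-scale equivalence; and checking that the cross pairing produced by $\partial_{\alpha}\partial_{\bar\beta}K$ is exactly the duality bracket between point- and hyperplane-directions, with the correct sign on each idempotent sector. Once the potential computation is organised so that the $e_{+}$- and $e_{-}$-components are read off separately, each is a Fubini--Study-type expression on the corresponding real projective factor, the two being coupled only through $a\cdot b$; comparing with the symmetric-space metric of the $0$-pair space then closes the argument. The excluded incidence stratum $a\cdot b=0$, where the metric blows up just as the cone metrics of Theorem~\ref{Th:sqf} blow up near the faces, is precisely where one should expect the isometry to degenerate, consistently with the condition $p\notin H$.
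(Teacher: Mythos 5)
Your construction is correct, but be aware that the paper itself contains no argument for this proposition: its entire proof is the citation ``see \cite{Ro97}, section 4.4.5'', and what you have written out --- the idempotent splitting $e_{\pm}=\tfrac12(1\pm\e)$ of $\fC$, the identification of the non-null locus of the paracomplex projective space with non-incident pairs $([a],[b])$ of a point and (via duality) a hyperplane, and the para-Fubini--Study potential $\ln|h(z,z)|$ --- is precisely the classical Rozenfeld argument that this citation points to. So you have reconstructed the content of the reference rather than taken a different route. One genuine improvement you could make: the step you call ``the hard part'' (matching the split-signature Hessian of the potential against the canonical invariant metric of the pair space, with the correct normalisation) does not require coordinate bookkeeping. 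Your identification is equivariant under the para-unitary group $U(n+1,\fC)$: writing $A=A_{+}e_{+}+A_{-}e_{-}$, the condition $A^{*}A=I$ forces $A_{-}=(A_{+}^{T})^{-1}$, so $U(n+1,\fC)\cong GL(n+1,\R)$ acting by $a\mapsto A_{+}a$, $b\mapsto (A_{+}^{T})^{-1}b$, which is exactly how $GL(n+1,\R)$ acts on point--hyperplane pairs; hence both metrics are invariant under the same transitive group. At a base pair the isotropy group is $GL(n,\R)$ acting on the tangent space $V\oplus V^{*}$ with $V=\R^{n}$, and the space of $GL(n,\R)$-invariant symmetric bilinear forms there is one-dimensional, spanned by the duality pairing (neither $S^{2}V$ nor $S^{2}V^{*}$ contains an invariant vector). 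So any two invariant metrics agree up to a constant scale, and the scale is fixed by a single evaluation, e.g. at $a=b=0$ in your chart, where $\partial_{a_{\alpha}}\partial_{b_{\beta}}\ln|1+a\cdot b|=\delta_{\alpha\beta}$ is visibly the duality pairing. This collapses your ``conformal versus genuine isometry'' worry to a one-line check and completes the sketch into a proof.
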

\begin{proof} see e.g.~\cite{Ro97} section 4.4.5.\end{proof}
\medskip
\begin{proposition}\label{P:zero}
Suppose that $(X,\mathcal{F})$ is a finite measurable set where the dimension of $X$ is $n+1$, and measures vanish only on an ideal $\mathcal{I}$.
Let $\mathcal{H}_{n}$ be the space of probability distributions on $(X,\mathcal{F})$.
Then, the space $\mathcal{H}_{n}$ is a manifold of 0-pairs.
 \end{proposition}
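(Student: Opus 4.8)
The plan is to exhibit an explicit identification between the space $\cH_n$ of probability distributions on a finite measurable set $(X,\cF)$ with $\dim X = n+1$ and the space of $0$-pairs in a projective space $\cP^n$. The starting observation is that, since measures vanish only on the ideal $\cI$, a probability distribution on $(X,\cF)$ is determined by a point $(p_0,\dots,p_n)$ with $p_i > 0$ and $\sum_i p_i = 1$; that is, $\cH_n$ is the open simplex $^{\circ}\Delta_X$ of Section~\ref{S:bcfd}. First I would pass, as in Section~\ref{S:ccpF}, from the simplex to the open convex cone $V$ it generates by taking the union of oriented half-lines through the origin, so that the projectivization of $V$ is naturally an open subset of $\cP^n$ and carries the affine coordinate system coming from $R = \R^X$.

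The heart of the argument is to match the two pieces of data defining a $0$-pair (Definition~\ref{D:mpairs}) with intrinsic structures on $\cH_n$. By Definition~\ref{D:mpairs} a $0$-pair is a point together with an $(n-1)$-plane, i.e.\ a hyperplane in $\cP^n$ not containing that point; equivalently, by the normalization data of the preceding Definition, at each point $p\in X_d$ with $d=n$ the normal $P_I$ of the first type degenerates to $p$ itself, while $P_{II}$ is the $(n-1)$-dimensional hyperplane in the tangent space missing $p$. The plan is to show that the canonical (Vinberg) affine structure on the cone $V$, or equivalently the two flat affine connections supplied for rank-$2$ Jordan algebras in Proposition~\ref{P:rank2} and for $m$-pairs in Lemma~\ref{L:pairs}, furnishes exactly such a second-type normal hyperplane at every point of $\cH_n$. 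Concretely, I would use the dual projective space $(\cP^n)'$ and the self-duality reflected in the remark ``The $0$-pair can be identified with the projective space $\cP^n$'': each probability distribution selects both a point of $\cP^n$ and, via the dual coordinates (the $x'$ pairing of Section~\ref{S:ccpF}), a hyperplane in the dual, and the pairing $\langle x,x'\rangle$ guarantees the incidence and non-incidence conditions required of $P_I$ and $P_{II}$.

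Next I would verify that this assignment is a diffeomorphism onto the space of $0$-pairs, and that it is compatible with the differentiable and projective structure asserted in part~(1) of Lemma~\ref{L:pairs}; here the affine-linear transition functions of the flat structure (Section~\ref{S:cft}) on the cone match the projective-differentiable atlas on the manifold of $0$-pairs. The final step is to confirm that the two flat, affine, symmetric connections on the $0$-pair manifold (Lemma~\ref{L:pairs}(2)) correspond under this identification to the canonical pair of dual connections on $\cH_n$, which by Proposition~\ref{P:isome} is isometric to the hermitian projective space over the paracomplex algebra $\fC$; this is what realizes $\cH_n$ as a paracomplex object and ties the statement back to the $\e^2=1$ case of Proposition~\ref{P:rank2}.

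The main obstacle I anticipate is the precise bookkeeping of the duality: showing that the second-type normal $P_{II}$ produced by the dual coordinates genuinely lies in the tangent $n$-plane and misses the base point $p$, uniformly over all of $\cH_n$ and not merely generically. In other words, the delicate point is not the pointwise linear algebra but checking that the pairing $\langle x, x'\rangle$ and the Hessian metric $g_{ij} = \partial^2 \ln\varphi_V/\partial x^i\partial x^j$ of Theorem~\ref{Th:sqf} degenerate in exactly the right way near the faces of the simplex, so that the normalization is well-defined on all of $\cH_n$ rather than only on the locus where $p_C$ (in the earlier notation) is \'etale. I would handle this by citing the explicit model in~\cite{Ro97}, section 4.4.5, invoked in Proposition~\ref{P:isome}, and reducing the finite case to that computation.
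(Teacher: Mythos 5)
Your first step coincides with the paper's own proof: the paper realizes $\cH_n$ as the intersection of the hyperplane $\mu(X)=1$ with the cone $\cC_{n+1}$ of strictly positive measures inside the affine space $\cW_{n+1}$ of signed bounded measures, reads this as an $n$--dimensional surface of $\cP^n$, and then concludes in one line by citing the remark in section 0.4.3 of \cite{Ro97} together with Definition~\ref{D:mpairs}. Your duality construction is a legitimate way of making explicit the normalization that the paper outsources to Rozenfeld: assigning to each strictly positive distribution $p$ the hyperplane determined by the dual point $x'=-d\ln\varphi_{V}(p)$ of Sec.~\ref{S:ccpF}, with non--incidence guaranteed by $\langle p,x'\rangle\neq 0$ (by homogeneity of $\varphi_V$ this pairing equals $\dim R$ up to sign), does produce at each point a second--type normal $P_{II}$, hence a $0$--pair $(p,P_{II}(p))$.

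However, one step of your plan is genuinely wrong and would fail if carried out: you propose to ``verify that this assignment is a diffeomorphism onto the space of $0$-pairs.'' This is impossible for dimension reasons: a $0$--pair is a point of $\cP^n$ together with a hyperplane avoiding it, so the space of $0$--pairs has dimension $2n$, while $\cH_n$ has dimension $n$. What the proposition asserts --- note the wording ``a manifold of $0$-pairs,'' as opposed to ``the space of $0$-pairs'' appearing in Lemma~\ref{L:pairs}(1) and Proposition~\ref{P:isome} --- is that $\cH_n$ is a \emph{normalized} surface, i.e.\ that it carries a smooth field of $0$--pairs, so that $p\mapsto (p,P_{II}(p))$ embeds $\cH_n$ as an $n$--dimensional submanifold of the $2n$--dimensional space of $0$--pairs; surjectivity is neither true nor needed. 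Two smaller points: the obstacle you single out (degeneration of $g_{ij}$ near the faces of the simplex) is vacuous, because the hypothesis that measures vanish only on the ideal $\cI$ confines everything to the open simplex, where Theorem~\ref{Th:sqf} gives an everywhere non--degenerate Riemannian metric --- the faces simply do not belong to $\cH_n$, so no uniformity issue arises; and section 4.4.5 of \cite{Ro97}, which you invoke as a fallback, is the reference for the isometry of Proposition~\ref{P:isome}, not for the normalization claim, for which the paper's citation is section 0.4.3 of the same book.
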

 
 \begin{proof}
 The $n$-dimensional surface $\mathcal{H}_{n}$ is the intersection of the hyperplane $\mu(X)=1$ with the cone $\cC_{n+1}$ of strictly positive measures, in the affine space $\cW_{n+1}$ of signed bounded measures. It is interpreted as a $n$-dimensional surface (also denoted by $\mathcal{H}_{n}$) of the projective space $\cP^{n}$. Then, the geometrical structure of this surface is inherited from projective geometry. Using the remark in the first paragraph of section 0.4.3 in \cite{Ro97} and the definition\, \ref{D:mpairs} of 0-pairs, one deduces that it corresponds to a manifold of 0-pairs.
 \end{proof}
 
 \medskip
 
\begin{theorem}\label{Th:main}
Suppose that $(X,\mathcal{F})$ is a finite measurable set where the dimension of $X$ is $n+1$, and measures vanish only on an ideal $\mathcal{I}$.
The space $\mathcal{H}_{n}$ of probability distributions on $(X,\mathcal{F})$ is isomorphic to the hermitian projective space over the cone $M_{+}(2,\fC)$.
\end{theorem}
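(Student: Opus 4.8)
The plan is to obtain the asserted isomorphism by concatenating the two bridging results established just above and then reconciling the two descriptions of ``hermitian projective space'' that occur in them. First I would invoke Proposition~\ref{P:zero}: under the standing hypotheses on $(X,\cF)$ (finite, with $\dim X = n+1$ and measures vanishing only on the ideal $\cI$), the space $\cH_n$ of probability distributions is realized as a manifold of $0$-pairs inside $\cP^n$. This is the step that converts the statistical object into purely projective-geometric data, and it rests on the embedding of $\cH_n$ as the surface $\{\mu(X)=1\}\cap \cC_{n+1}$ in $\cP^n$ already used there.

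Next I would apply Proposition~\ref{P:isome}, which identifies the space of $0$-pairs in $\cP^n$ isometrically with the hermitian projective space over the algebra $\fC$ of paracomplex numbers. Composing the two identifications produces an isometry between $\cH_n$ and the hermitian projective space over $\fC$, so that the whole statistical content has been transported into paracomplex projective geometry. Up to this point nothing beyond a direct concatenation of results stated above is needed.

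The remaining and principal task is to match the synthetic model ``hermitian projective space over the algebra $\fC$'' with the Jordan-algebraic model ``hermitian projective space over the cone $M_{+}(2,\fC)$'' named in the theorem. Here I would use that $\fC$ is exactly the rank-two algebra generated by $\{1,\e\}$ with $\e^2=1$ singled out in Proposition~\ref{P:rank2}, and that the irreducible homogeneous self-dual cone attached to it is, by Proposition~\ref{P:Vclass}(5) together with Theorem~\ref{Th:iffea}, the cone $M_{+}(2,\fC)$ of positive $2\times 2$ paracomplex Hermitian matrices. The bridge is the standard realization of the projective line over an algebra $A$ as the rank-one positive elements, taken up to positive scalar, of the Jordan algebra of $2\times 2$ Hermitian matrices over $A$. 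For $A=\fC$ one writes a general element of that Jordan algebra as $\bigl(\begin{smallmatrix} a & z \\ \bar z & b\end{smallmatrix}\bigr)$ with $a,b\in\R$ and $z\in\fC$, whose norm form $ab-z\bar z = ab-(x^2-y^2)$ is the paracomplex Hermitian form; its projectivized rank-one positive locus is the hermitian projective space over $M_{+}(2,\fC)$, and it is this locus that I would identify with the object produced by Proposition~\ref{P:isome}.

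The main obstacle is precisely this last identification. One must check that the two a priori distinct models agree not merely as smooth manifolds but as paracomplex manifolds carrying their canonical metrics: the paracomplex structure $\fK$ and the metric transported to $\cH_n$ through Propositions~\ref{P:zero} and~\ref{P:isome} must coincide with those induced by the Jordan structure on $M_{+}(2,\fC)$. Concretely this means matching the projective coordinates over $\fC$ with the rank-one idempotent coordinates on the cone, and verifying that positivity of the measures corresponds to positivity in the Jordan sense; the cited treatments of affine spaces over algebras of finite rank are what supply this compatibility, and the attendant dimension bookkeeping is the point that requires genuine care.
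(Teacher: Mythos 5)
Your proposal is correct and follows the same basic strategy as the paper --- a concatenation of the bridging results of Section~\ref{S:statman} --- but the two chains are not identical, and the difference is worth recording. The paper's proof is one sentence: Proposition~\ref{P:zero} (realizing $\cH_n$ as a manifold of $0$-pairs), then Lemma~\ref{L:pairs} (such a manifold carries two flat, affine, symmetric connections), then Proposition~\ref{P:rank2} (two such connections force the underlying algebra to have rank $2$, generated by $\{1,\e\}$ with $\e^2=\pm 1$); note that this chain by itself does not exclude the complex case $\e^2=-1$ and never explicitly produces the cone $M_{+}(2,\fC)$, both points being left implicit. You instead pass from Proposition~\ref{P:zero} directly to Proposition~\ref{P:isome}, which yields the paracomplex hermitian projective space at once (so the sign ambiguity never arises), and you then make explicit what the paper omits entirely: the Jordan-algebraic identification of the hermitian projective space over the algebra $\fC$ with the one over the cone $M_{+}(2,\fC)$, via Proposition~\ref{P:Vclass}(5), Theorem~\ref{Th:iffea}, and the norm form $ab-z\bar z$ on $2\times 2$ hermitian paracomplex matrices. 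What the paper's route buys is the two-flat-connections property itself, which is exactly what gets reused right after the theorem (in the discussion preceding Lemma~\ref{L:tor}); what your route buys is that it actually lands on the object named in the statement. Your closing caveat --- that the two models must be matched as paracomplex metric manifolds, with careful dimension bookkeeping --- identifies precisely the compatibility that the paper delegates to~\cite{Ro97} without verification; your write-up is, if anything, more explicit than the printed proof.
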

\begin{proof}
This is a consequence of applying Proposition\, \ref{P:zero}, Lemma\,\ref{L:pairs} and finally Proposition\, \ref{P:rank2}.
\end{proof}

\subsection{Paracomplex potentiality of spaces of probability distributions.}
\vspace{3pt}

Now we will describe explicitly the analogues of local potentials $\varphi$ from Sec.~\ref{S:covcone}
in the paracomplex geometry. Using this description, we will state the paracomplex version of Theorem~\ref{Th:sqf} 
for cones of probability
distributions.
\vspace{3pt}

\begin{theorem} [Paracomplex Dolbeault lemma]~\label{Th:paraD}
 Any (local) potential $\varphi$ on a cone of probability distributions determine the
local paracomplex Dolbeault $(1,1)$--form 
\[
\widetilde{\omega} := \partial_{+} \partial_{-} \varphi = \varepsilon \partial \overline{\partial} \varphi.
\]
The potential $\varphi$ is defined uniquely modulo subspace of local functions
$Ker\, \partial_{+} \partial_{-}$.
\end{theorem}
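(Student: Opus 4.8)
The plan is to deduce everything from a single coordinate computation comparing the two natural bigradings of the paracomplexified de Rham complex of the cone. The paracomplex structure needed here is available by Theorem~\ref{Th:main}: the space $\cH_n$ of probability distributions is isomorphic to the hermitian projective space over $M_+(2,\fC)$, hence inherits a paracomplex structure $\fK$, which in any affine (adapted) coordinate system is the flat structure of the paracomplex set-up of Section~\ref{S:statman}. In particular the Dolbeault operators $\partial,\overline{\partial}$ and their bigrading are intrinsic, so $\partial\overline{\partial}\varphi$ is a well-defined $(1,1)$--form independent of the chosen coordinates. First I would fix the adapted real coordinates $z_\pm^\alpha = x^\alpha \pm y^\alpha$ together with the paraholomorphic coordinates $z^\alpha = x^\alpha + \e y^\alpha$, and record that as operators on functions $\partial/\partial z_+^\alpha = \tfrac12(\partial_{x^\alpha}+\e\,\partial_{y^\alpha}) = \partial/\partial z^\alpha$ and $\partial/\partial z_-^\alpha = \partial/\partial\overline{z}^\alpha$.

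The computation itself has two ingredients. On coefficients, the previous identification shows that the mixed second derivative is literally the same object in both systems, $\partial^2\varphi/\partial z_+^\alpha\partial z_-^\beta = \partial^2\varphi/\partial z^\alpha\partial\overline{z}^\beta$; here $\e^2=1$ is exactly what makes the two prescriptions coincide. On forms, the real adapted differentials and the paraholomorphic differentials satisfy, after expanding the wedge products and using $\e^2=1$,
\[
dz^\alpha\wedge d\overline{z}^\beta \;=\; \e\, dz_+^\alpha\wedge dz_-^\beta .
\]
Writing $c_{\alpha\beta}:=\partial^2\varphi/\partial z^\alpha\partial\overline{z}^\beta$ and substituting both facts gives
\[
\e\,\partial\overline{\partial}\varphi = \e\sum_{\alpha,\beta} c_{\alpha\beta}\,dz^\alpha\wedge d\overline{z}^\beta = \e^2\sum_{\alpha,\beta}c_{\alpha\beta}\,dz_+^\alpha\wedge dz_-^\beta = \sum_{\alpha,\beta}c_{\alpha\beta}\,dz_+^\alpha\wedge dz_-^\beta = \partial_+\partial_-\varphi,
\]
which is the asserted equality $\widetilde\omega=\partial_+\partial_-\varphi=\e\,\partial\overline{\partial}\varphi$.

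It then remains to package the structural claims. The form has bidegree $(1,1)$ because $\partial_+$ supplies a factor $dz_+$ of type $(1,0)$ and $\partial_-$ a factor $dz_-$ of type $(0,1)$; it is $d$--closed by the formal identities $\partial^2=\overline{\partial}^2=0$ and $\partial\overline{\partial}=-\overline{\partial}\partial$ of the paracomplex Dolbeault complex (\cite{CoMaSa04}), since $d(\partial\overline{\partial}\varphi)=(\partial+\overline{\partial})\partial\overline{\partial}\varphi=0$. Coordinate independence is inherited from the intrinsic operator $\partial\overline{\partial}$, and the identity just proved transports it to the $\partial_+\partial_-$ description. Finally the uniqueness clause is a tautology of linearity: the map $\varphi\mapsto\widetilde\omega=\partial_+\partial_-\varphi$ is $\R$--linear, so two potentials produce the same $(1,1)$--form precisely when their difference is annihilated by $\partial_+\partial_-$, i.e. lies in $Ker\,\partial_+\partial_-$.

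The one genuinely delicate point -- and the step I would treat most carefully -- is \emph{fixing the normalisations so that the factor relating the two bigradings is exactly $\e$ rather than $1$ or $2\e$}: the real adapted differentials $dz_\pm$ are the differentials of the real eigencoordinates of $\fK$, whereas the operators $\partial/\partial z_\pm$ are the paracomplex derivatives, and one must keep these two roles consistently separated throughout. Once this bookkeeping is pinned down, the remaining calculations are mechanical. A final remark worth adding is that reality of the potential $\varphi$ makes $\widetilde\omega$ a \emph{real} $(1,1)$--form, which is what justifies calling it the paracomplex K\"ahler form with potential $\varphi$ and identifies its coefficients with the Hessian metric $g_{ij}=\partial^2\ln\varphi/\partial x^i\partial x^j$ of Theorem~\ref{Th:sqf}.
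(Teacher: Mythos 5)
There are two genuine problems here, one computational and one structural.

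First, the computational core of your argument is invalid as written: both substitution identities fail whenever $\alpha\neq\beta$. Expanding with $\e^2=1$,
\[
dz^\alpha\wedge d\overline{z}^\beta \;=\; A_{\alpha\beta}+\e\,B_{\alpha\beta},
\qquad
\e\,dz_+^\alpha\wedge dz_-^\beta \;=\; \e\,(A_{\alpha\beta}+B_{\alpha\beta}),
\]
where $A_{\alpha\beta}=dx^\alpha\wedge dx^\beta-dy^\alpha\wedge dy^\beta$ and $B_{\alpha\beta}=dy^\alpha\wedge dx^\beta-dx^\alpha\wedge dy^\beta$; the two sides agree precisely when $A_{\alpha\beta}=0$, i.e.\ only on the diagonal $\alpha=\beta$. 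The coefficient identity fails off the diagonal as well: for $\varphi=z_+^1z_-^2$ one computes $\partial^2\varphi/\partial z_+^1\partial z_-^2=1$ (derivatives along the real eigencoordinates) while $\partial^2\varphi/\partial z^1\partial\overline{z}^2=(1+\e)/2$. Your own convention, which declares $\partial/\partial z_+^\alpha:=\tfrac12(\partial_{x^\alpha}+\e\,\partial_{y^\alpha})=\partial/\partial z^\alpha$, makes the coefficient identity true by fiat, but then $\partial_+\partial_-\varphi$ acquires paracomplex coefficients and the asserted equality with $\e\,\partial\overline{\partial}\varphi$ is itself false for the same test function. The displayed identity $\partial_+\partial_-\varphi=\e\,\partial\overline{\partial}\varphi$ is true, but only as an identity of the summed operators, not term by term: writing $e^\pm=(1\pm\e)/2$, one has $\partial=e^+\partial_++e^-\partial_-$ and $\overline{\partial}=e^-\partial_++e^+\partial_-$, and then $e^+e^-=0$, $(e^\pm)^2=e^\pm$, together with $\partial_-\partial_+=-\partial_+\partial_-$ (symmetry of mixed partials), give $\partial\overline{\partial}=(e^+-e^-)\,\partial_+\partial_-=\e\,\partial_+\partial_-$. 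The cancellation you need happens only across the whole double sum, so two false identities happen to combine into a true conclusion; that is not a proof.

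Second, and more importantly, you never prove the existence half, which is the actual content of the paper's proof and the reason the statement is called a Dolbeault lemma. The paper starts from the closed para-K\"ahler $(1,1)$--form $\omega$ of the statistical manifold and \emph{constructs} a local potential: on a neighbourhood $U\cong U^+\times U^-$ with $U^\pm$ simply connected, $H^1(U,\R)=0$ yields $\omega=d\theta$; decomposing $\theta=\theta^++\theta^-$ by type and using that $\omega$ has type $(1,1)$ gives $\partial_\pm\theta^\pm=0$ and $\partial_-\theta^++\partial_+\theta^-=\omega$; path integration inside $U^+\times\{z_-\}$ (resp.\ $\{z_+\}\times U^-$) produces $\varphi^\pm$ with $\partial_\pm\varphi^\pm=\theta^\pm$; then $\varphi=\varphi^+-\varphi^-$ satisfies $\partial_-\partial_+\varphi=\omega$. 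The same analysis identifies $Ker\,\partial_+\partial_-$ concretely as the functions $f=f_++f_-$ with $\partial_\pm f_\pm=0$. Your proposal takes the potential as given and reduces uniqueness to linearity of $\varphi\mapsto\partial_+\partial_-\varphi$; that much is fine, but the existence statement and the explicit description of the kernel are exactly what the paper uses later in proving that $\cH$ is an $F$--manifold, so omitting them leaves the theorem, as the paper understands and applies it, unproved.
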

\vspace{3pt}

\begin{proof} The proof uses an explicit construction of the paracomplex structure encoded 
in the direct sum $R\oplus R^{\prime}$ from Sec.~\ref{S:ccpF} above.
\vspace{3pt}

Let $(V,I,g)$ be a para--K\"ahler manifold with para--K\"ahler form $\omega$. Consider
a point $p$ on $V$, and an an open neighborhood $U$ of $p$.
Let $(z^{i}_{\pm})$ be adapted coordinates defined on $U$ and mapping $U$ onto the product of two simply connected open sets $U^{\pm}\subset \R^n$, where $n=dim_{\fC}V$. Moreover, assume that $z^{i}_{\pm}(p)=0$. 
\vspace{3pt}

Suppose that $\partial_{+}\theta^{+}=0$ on $U\cong U^{+}\times U^{-}$. The Dolbeault paracomplex technique shows that there exists a function $\varphi^+$ on $U$, 
given by: 
\[
 \varphi^+:= \int_{(0,z_{-})}^{(z_{+},z_{-})}\theta^{+}.
\]

The integration is over any path from $(0,z_{-})$ to $(z_{+},z_{-})$ contained in $U^{+}\times \{ z_{-}\}.$ 

From the condition that $\partial_{+}\theta^{+}=0$, it follows that the integral is path independent and that the one-form $\theta^+$ restricted to $U^{+}\times \{ z_{-}\}$ is closed (and thus exact), since $U^+$ is simply connected. 
\vspace{3pt}

We now show that there exists a real valued function $\varphi$ defined in some 
simply connected open neighbourhood $U$ of $p$ such that 
$\omega=\partial_{-}\partial_{+}\varphi$ on $U$. The function $\varphi$ is unique up to addition of a real--valued function $f$ satisfying the equation $\partial_{-}\partial_{+}f=0$. Any such function is of the form $f=f_{+}+f_{-}$, where $f_{\pm}: U\to \R$ satisfying the equation $\partial_{\mp}\partial_{\pm}f=0$.

The first cohomology of $U$ vanishes, so $H^{1}(U, \R)=0.$ Since $\omega$ is closed, there exists a one--form 
$\theta$ such that $\omega=d\theta$. We decompose $\theta$ into its homogeneous components: 
$\theta=\theta^++\theta^{-}$, $\theta^+\in \Omega^{1,0}(U),$ $\theta^{-}\in \Omega^{0,1}(U)$. Then
\[
d\theta= \partial_{+}\theta^++(\partial_{-}\theta^{+}+ \partial_{+}\theta^{-}) + \partial_{-}\theta{-}.
\]
From the fact that $\omega$ is of the type (1,1), we obtain the equations: 
\[
\partial_{\pm}\theta^{\pm}=0,\quad \text{and} \quad \partial_{-}\theta^{+}+\partial_{+}\theta^{-}=\omega.
\]

Therefore, there exist two real-valued functions $\varphi^{\pm}$ such that $\partial_{\pm}\varphi^{\pm} = \theta^{\pm}$. Assuming that 
$\varphi:=\varphi^{+}-\varphi^{-}$, we have:
\[
\partial_{-}\partial_{+}\varphi= \partial_{-}\partial_{+}\varphi^{+} + \partial_{+}\partial_{-}\varphi^{-}= \partial_{-}\theta^{+}+ \partial_{+}\theta^{-} =\omega.
\]
It is clear that the function $\varphi$ is unique up to adding a solution of $\partial_{-}\partial_{+}f= 0$ (in fact, any solution is of the form $f=f_{+}+f_{-}$, where $\partial_{\pm}f_{\pm}=0.$).
Let us consider $\partial_{+}f=\sum f_{i}^{+}dz_{+}^{i}, \quad $ with $\quad f_{i}^{+}=\frac{\partial f}{\partial z^i_{+}},$ 
We get 
\[
0=\partial_{-}\partial_{+}f=\sum \frac{\partial f_{i}^{+}}{\partial z^j_{-}}dz^j_{-}\wedge dz^{i}_{+}.
\]
Therefore, $\frac{\partial f_{i}^{+}}{\partial z^j_{-}}=0$ and the functions $\partial f_{i}^{+}$ depend only on the positive coordinates $z_+$.
So, we obtain 
\[
f=\sum \int_{0}^{z_{+}}f_i^{+}(\xi)d\xi^i+ f_{-}(0,z_{-}), 
\] 
where $\xi=(\xi^1,\dots,\xi^n)$. The path integral is well defined, $U^+$ being simply connected. By a change of notation, we have that $f_{+}=\sum\int_{0}^{z_{+}}f_{i}^{+}(\xi)d\xi^i$, and thus $f=f_{+}+ f_{-}$. 

Conversely, let $\varphi$ be a real-valued function on $U\subset V$ such that $\omega=\partial_{-}\partial_{+}\varphi$ is a non-degenerate two--form. This two--form is closed and of type (1,1). This is equivalent to $I^{*}\omega=-\omega$, which implies that $g:=\omega(I\cdot,\cdot)$ is symmetric i.e. $g(X,Y)=g(Y,X)$ . 
\end{proof}
\vspace{3pt}

\subsection{Projective geometry of statistical manifolds}
\

Returning to the proof of Theorem~\ref{Th:main} above, we recall that the space of probability
distributions over a finite set is endowed with two flat subspaces with flat connections. 
 This property is common to a large class of probability distributions, generalizing the distributions on a finite set. 
\vspace{3pt}
 
 More precisely, let us consider the positive cone $\cC$ of strictly positive measures on a space 
 $(X,\cF)$, vanishing only on an ideal $\cI$ of the $\sigma$--algebra $\cF$ of the $n$--dimensional real space 
 $\cW$ of the signed measures of bounded variations (i.e. signed measures whose total variation $ \Vert \mu \Vert =|\mu |(X) $ is bounded, vanishing only on an ideal $\mathcal{I}$ of the $\sigma$-algebra $\mathcal{F}$).
\vspace{3pt}

Let $\cH\subset \cC$ be the subset of probability distributions defined by the following constraint on measures $\mu\in \cW$:
\[
 \langle 1,\mu\rangle =1 , \text{ where }
\langle f,\mu\rangle= \int_{X}fd\mu.
\]

We associate to any parallel transport $h$ in the covector space $\cW^{*}$ of the space $\cW$ of $\sigma$--finite measures
$f\xrightarrow{h} f+h$,
 an automorphism of the cone $\cC$
\[
\mu \xrightarrow{h} \nu,\, \text{ where }\, \frac{d\nu}{d\mu}\, =\, \exp (h), 
\]
where $d\nu/d\mu$ is the Radon--Nikodym derivative of the measure $\nu$ w.r.t. the measure $\mu$.
This automorphism is a {\it non--degenerate} linear map of $\cW$ which leaves the cone invariant. 
\vspace{3pt}

Let $\fG$ be the group of all automorphisms $h$ such that $h\, =\, \ln\, \frac{d\nu}{d\mu}.$ 
The commutative subgroup of all ``translations'' of the cone $\cC$ is a simply transitive Lie group, so the cone is homogeneous. To this group $\fG$ the associated Lie algebra $\fg$ defines the derivation of the cone. 
\vspace{3pt}

The cone $\cC$ is not invariant w.r.t. the group $\fG$, but, since $\cC\cap h(\cC)\ne \emptyset$ for any $h\in \fG$,
 $\fG$ is 
the so called pseudo--group of automorphisms of $\cC$. 
The subset $\cH$ of probability distributions is a hypersurface in $\cC$ which can be equipped with an paracomplex algebraic structure (see Proposition\, \ref{P:isome} for information about the algebraic structure)

\begin{lemma}\label{L:tor}
The manifold of probability distributions $\cH$ is torsionless.
\end{lemma}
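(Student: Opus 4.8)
The plan is to write the canonical connection of $\cH$ explicitly in adapted coordinates and to read off the vanishing of its torsion from the symmetry of the Christoffel symbols. First I would recall the structure already assembled: by Proposition~\ref{P:zero} the space $\cH$ is a manifold of $0$--pairs, and by Theorem~\ref{Th:main} it is isomorphic to the hermitian projective space over $M_{+}(2,\fC)$; in particular it carries a paracomplex structure $\fK$ together with, by Proposition~\ref{P:rank2} and Lemma~\ref{L:pairs}, two flat affine connections $\nabla^{\pm}$ adapted to the eigenspace splitting $T^{\fC}\cH = T^{1,0}\cH \oplus T^{0,1}\cH$. Torsionlessness is the assertion that the canonical connection(s) of $\cH$ are symmetric, and the quickest route to it is through the potential.

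Next I would bring in the potential. By the paracomplex Dolbeault lemma (Theorem~\ref{Th:paraD}) there is, locally, a real function $\varphi$ with $\omega = \partial_{-}\partial_{+}\varphi$ and with $g := \omega(\fK\cdot,\cdot)$ symmetric. In adapted coordinates $(z_{+}^{\alpha}, z_{-}^{\alpha})$ this produces a para--K\"ahler metric whose components are mixed second derivatives of $\varphi$, in exact parallel with the formula $g_{ij} = \partial^2 \ln \varphi / \partial x^i \partial x^j$ of Section~\ref{S:ccpF}. The canonical connection of $\cH$ is then the paracomplex analogue of the connection of Theorem~\ref{Th:sqf}(2), whose components in these coordinates take the form
\[
\Gamma^i_{jk} = \frac{1}{2}\sum_l g^{il}\,\frac{\partial^3 \Phi}{\partial x^j \partial x^k \partial x^l},
\]
where $\Phi$ is the relevant scalar potential ($\ln\varphi$ in the notation of Theorem~\ref{Th:sqf}, built from the Dolbeault potential of Theorem~\ref{Th:paraD}).

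The torsion of this connection is $T^i_{jk} = \Gamma^i_{jk} - \Gamma^i_{kj}$, and it vanishes identically because the third partial derivatives of the single scalar potential $\Phi$ are totally symmetric in the lower indices by Schwarz's theorem; hence $\Gamma^i_{jk} = \Gamma^i_{kj}$. Equivalently, each of the two flat connections $\nabla^{\pm}$ splits, as in the proof of Proposition~\ref{P:rank2}, into a direct sum of flat affine connections on the two factors, and a flat connection whose components derive from a potential is automatically symmetric; the Levi--Civita connection of the para--K\"ahler metric $g$, being the symmetric average of $\nabla^{+}$ and $\nabla^{-}$, is then torsion--free as well.

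The step I expect to be the main obstacle is the second one: verifying that the connection inherited by $\cH$ from its realisation as a manifold of $0$--pairs (equivalently, from the projective and paracomplex structures of Theorem~\ref{Th:main}) coincides with the potential--derived connection whose components are the symmetric third derivatives above. This requires matching the projective $m$--pair construction of Lemma~\ref{L:pairs} with the para--K\"ahler potential of Theorem~\ref{Th:paraD}, and checking that the paracomplex splitting of the connection established in Proposition~\ref{P:rank2} is compatible with $g$, so that torsion--freeness on each flat factor propagates to $\cH$. Once this identification is secured, torsionlessness follows immediately from the commutation of mixed partial derivatives.
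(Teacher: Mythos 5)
Your proposal takes a genuinely different route from the paper, and it contains a genuine gap --- one you flag yourself as ``the main obstacle'' but never close. Your argument establishes symmetry of the Christoffel symbols only for the connection \emph{defined} by the potential formula $\Gamma^i_{jk} = \tfrac{1}{2}\sum_l g^{il}\,\partial^3\Phi/\partial x^j\partial x^k\partial x^l$; for that connection torsion--freeness is immediate (it is built into the formula, by Schwarz), so the entire content of the lemma is displaced onto the unproved identification of this potential--derived connection with the connection that $\cH$ actually inherits from its realisation as a manifold of $0$--pairs (Proposition~\ref{P:zero}, Lemma~\ref{L:pairs}, Theorem~\ref{Th:main}). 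Deferring that identification means the lemma is not proved. Your fallback argument does not repair this: the Levi--Civita connection of the para--K\"ahler metric is torsion--free \emph{by definition}, so invoking it says nothing about the two flat affine connections $\nabla^{\pm}$ that the lemma concerns; and asserting that these split into flat connections ``derived from a potential'' presupposes exactly the identification that is missing.

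The paper's own proof avoids potentials and metrics entirely and is purely algebraic, working in Shirokov's framework of spaces over a finite--dimensional commutative algebra \cite{Sh02}. One writes the real components of the affine connection on $E_{2n}$ in terms of the $A$--valued components $\Gamma^{\alpha}_{\beta\gamma}$ and the structure constants $C^{k}_{ij}$ of the rank--2 algebra $A$,
\[
\Gamma^{(\alpha,i)}_{(\beta,j)(\gamma,k)}=\Gamma^{\alpha,s}_{\beta\gamma}C^{m}_{sj}C^{i}_{mk},
\]
and then reads off the symmetry $\Gamma^{\alpha}_{\beta\gamma}=\Gamma^{\alpha}_{\gamma\beta}$ of the lower indices from the commutativity of $A$, i.e.\ from $C^{k}_{ij}=C^{k}_{ji}$; this symmetry is precisely the vanishing of torsion. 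No appeal to Theorem~\ref{Th:paraD}, to the para--K\"ahler form, or to mixed partial derivatives is needed: the symmetry is inherited from the algebra, not from a Hessian. If you want to salvage your route, you must actually carry out the matching you postpone --- prove that the $0$--pair connections of Lemma~\ref{L:pairs} coincide with the paracomplex analogue of the Vinberg connection of Theorem~\ref{Th:sqf}(2) built from the Dolbeault potential --- and that verification is a separate piece of work at least as long as the paper's whole argument.
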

\begin{proof}
We consider the $n$-dimensional affine space over an algebra $A$. By the previous results, we can assume that this algebra is of rank 2. Recall that $A$ is finite-dimensional, unitary, associative. We interpret this as the affine space $E_{2n}$. One particularity is that we have a representation of the algebra such that 
to any generator of $A$ corresponds a unique endomorphism $E_{2n}$ (the structural endomorphisms). 

\smallskip  

We turn our considerations to so-called dyadics, i.e. endomorphisms depending on the constant structures of the algebra. 
Let $\bv$ be a vector in $E_{2n}$, given by $\bv=v^{(\alpha,i)}\be_{(\alpha ,i)}\in E_{2n}$. There corresponds the element $\bV=V^{\alpha}\bE_{\alpha}\in M_{n}(A)$, where $M_{n}(A)$ is the free unitary $A$-module with basis $\bE_{\alpha}$.

\smallskip 

Now, we consider $M_{2n}$ the differentiable manifold defined by $A$. This is given by the space of affine connections. The regular structure defined by the algebra $A$ arises on it in the case where on $M_{2n}$ we have a set of $2$ dyadic tensors, with matrices simultaneously reduced to the form 
\smallskip 
\[\begin{pmatrix}
\hat{C}_k & \cdots&0\\
\vdots &\ddots&\vdots \\
0 & \cdots& \hat{C}_k
\end{pmatrix}\]
where $\hat{C}_k=(C_{jk}^i)$ is defined in the adapted basis $\be_{(\alpha ,i)}$. 
Each tangent vector space serves as a real model (i.e. a representative in the affine space $E_{2n}$) of the module $M_n(A).$

\smallskip 

On the manifolds $M_{2n}$ defined by $A$, we have a field of objects $\Gamma^{\alpha}_{\beta\gamma}= \Gamma^{(\alpha,s)}_{\beta\gamma}\be_s\in A$ in local coordinates.
Since, we have the relation \[\Gamma^{(\alpha,i)}_{(\beta,j)(\gamma,k)}=\Gamma^{\alpha,s}_{\beta\gamma}C^{m}_{sj}C^{i}_{mk}\]
with respect to the local adapted coordinates $x^{(\alpha,i)}$ (see\, \cite{Sh02}, equation (3)).

\smallskip 
From the commutativity relation of the constant structures defining $A$, we have the $C_{ij}^k=C_{ji}^{k}$. Therefore, we have $\Gamma^{\alpha}_{\beta\gamma}=\Gamma^{\alpha}_{\gamma\beta}$, which implies that $\cH$ is torsionless.

\end{proof}

\medskip
\begin{theorem}
The manifold of probability distributions $\cH$ is an $F$--manifold.
\end{theorem}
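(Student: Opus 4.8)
The goal is to show that $\cH$, the space of probability distributions on a finite measurable set, carries the structure of an $F$--manifold. The natural plan is to verify the hypotheses of Proposition~\ref{P:fid}, which guarantees the $F$--identity whenever the commutative associative product $\circ$ admits a compatible flat structure. So the strategy has two prongs: first exhibit a commutative associative multiplication $\circ$ on $\cT_{\cH}$, and second produce a compatible affine flat structure (equivalently, a torsionless flat connection whose flat fields have a local vector potential $C$ with $X\circ Y = [X,[Y,C]]$).

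The first prong is already essentially in hand. By Theorem~\ref{Th:main}, $\cH$ is isomorphic to the hermitian projective space over the cone $M_+(2,\fC)$, so it inherits the convex-cone geometry of Section~\ref{S:ccpF}. I would invoke Theorem~\ref{Th:sqf}(3): the characteristic function $\varphi$ of the cone gives a metric $g_{ij} = \partial_i\partial_j \ln\varphi$, a canonical torsionless connection with Christoffel symbols $\Gamma^i_{jk} = \tfrac12 \sum_l g^{il}\partial_j\partial_k\partial_l \ln\varphi$, and hence a commutative $\R$--bilinear product $\partial_j\circ\partial_k = -\sum_i \Gamma^i_{jk}\partial_i$. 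The paracomplex refinement from the Paracomplex Dolbeault Lemma (Theorem~\ref{Th:paraD}) supplies the local potential $\varphi$ with $\widetilde\omega = \partial_+\partial_-\varphi$, so the product is genuinely potential in the sense of Section~\ref{S:sing}.

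The second prong is where the real content sits, and it splits into two facts established earlier. The existence of the two flat affine connections comes from Proposition~\ref{P:rank2} together with Lemma~\ref{L:pairs}(2): because $\cH$ is a manifold of $0$-pairs (Proposition~\ref{P:zero}) over the rank-$2$ paracomplex algebra $\fC$, it carries exactly two flat, affine, symmetric connections, which furnish the affine flat structure $\cT_{\cH}^f$ of Subsection~\ref{S:cft}. Torsionlessness — the symmetry $\Gamma^\alpha_{\beta\gamma} = \Gamma^\alpha_{\gamma\beta}$ needed for these to be honest flat connections defining $Ker\,\nabla_0$ — is exactly the conclusion of Lemma~\ref{L:tor}, which I would cite directly. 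What remains is to check that this flat structure is \emph{compatible} with $\circ$: that the product of flat fields is expressible as $X\circ Y = [X,[Y,C]]$ for a local vector potential $C$. Here the potentiality from Theorem~\ref{Th:paraD} is the key input, since a third-derivative potential for $\circ$ is precisely what lets one integrate the Christoffel symbols into such a $C$.

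I expect the main obstacle to be the compatibility verification — bridging the potential $\varphi$ of Theorem~\ref{Th:paraD} to the \emph{vector} potential $C$ demanded by Subsection~\ref{S:cft}. Producing $C$ explicitly (or equivalently checking the potentiality criterion $R_1 = 0$ from the final Proposition of Section~\ref{S:sing}, whose vanishing is equivalent to the existence of the scalar potential) is the step that ties together the paracomplex and convex-cone pictures, and it is the only place where one cannot simply quote a prior result verbatim. Once compatibility is in place, Proposition~\ref{P:fid} applies immediately and $(\cH, \cT_{\cH}, \circ)$ is an $F$--manifold.
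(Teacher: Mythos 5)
Your proposal takes essentially the same route as the paper's own proof: the same chain of inputs (Lemma~\ref{L:tor} for torsionlessness, Theorem~\ref{Th:main}, Proposition~\ref{P:zero}, Lemma~\ref{L:pairs} and Proposition~\ref{P:rank2} for the two flat affine connections coming from the paracomplex/$0$-pair structure, and Theorem~\ref{Th:paraD} for the local potential $\varphi$), concluded by exhibiting a vector potential $C$ with $X\circ Y=[X,[Y,C]]$ for flat fields and appealing to Proposition~\ref{P:fid}. The one step you flag as genuinely remaining --- integrating the scalar potential $\varphi$ into the vector potential $C$ --- is precisely the step the paper itself asserts without further detail, so your account matches the paper and is, if anything, more explicit about where the real content lies.
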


\begin{proof}
From Lemma\, \ref{L:tor}, the manifold $\cH$ is torsionless and from theorem\, \ref{Th:main}, we know that $\cH$ has a paracomplex structure. 
\vspace{3pt} 

We have shown in Proposition\, \ref{P:zero} and Theorem\, \ref{Th:main} that the manifold of probability distributions is, geometrically speaking, a projective euclidean manifold and that it has a Clifford algebra structure (for further information see as well \cite{MoCh91-1}~\cite{MoCh91-2}~\cite{Ch82}).
 
\vspace{3pt}
 
From section \ref{S:4.3}  it follows that $\cH$ contains two distinct real projective flat subspaces  (see also the Rozenfeld--Yaglom theorem \cite{RoYa51}, p.112). On the other hand, we already know the potentiality of $\cH$. We can now define respective closed 2--paracomplex form 
$\tilde{\omega}$. 
\vspace{3pt}

From the proof of the paracomplex Dolbeault Lemma, it follows that locally there exists a real-valued function 
$\varphi$ (potential) such that 
\[
\tilde{\omega}=\partial_{+}\partial_{-}\varphi=\e \partial \overline{\partial} \varphi.
\]
The potential $\varphi$ is defined up to addition of a function $f$ satisfying the condition $\partial_{+}\partial_{-}f\, =\, 0.$
\vspace{3pt}

Going back to the multiplication operation $\circ$, we see again that for any pair of flat vector fields, $X, Y$, there exists a vector field $C$ (a potential vector field) such that the multiplication operation is given by $X \circ Y=[X,[Y,C]]$. Therefore $\cH$ is an $F$--manifold.
\end{proof}

\end{document}